\begin{document}

\newcommand{\mmbox}[1]{\mbox{${#1}$}}
\newcommand{\affine}[1]{\mmbox{{\mathbb A}^{#1}}}
\newcommand{\Ann}[1]{\mmbox{{\rm Ann}({#1})}}
\newcommand{\caps}[3]{\mmbox{{#1}_{#2} \cap \ldots \cap {#1}_{#3}}}
\newcommand{\N}{{\mathbb N}}
\newcommand{\Z}{{\mathbb Z}}
\newcommand{\Q}{{\mathbb Q}}
\newcommand{\R}{{\mathbb R}}
\newcommand{\KK}{{\mathbb K}}
\newcommand{\A}{{\mathcal A}}
\newcommand{\B}{{\mathcal B}}
\newcommand{\OO}{{\mathcal O}}
\newcommand{\C}{{\mathbb C}}
\newcommand{\PP}{{\mathbb P}}
\newcommand{\OS}{{T^d(X,p)}}

\newcommand{\Tor}{\mathop{\rm Tor}\nolimits}
\newcommand{\Ext}{\mathop{\rm Ext}\nolimits}
\newcommand{\Hom}{\mathop{\rm Hom}\nolimits}
\newcommand{\Sym}{\mathop{\rm Sym}\nolimits}
\newcommand{\im}{\mathop{\rm im}\nolimits}
\newcommand{\rk}{\mathop{\rm rk}\nolimits}
\newcommand{\codim}{\mathop{\rm codim}\nolimits}
\newcommand{\supp}{\mathop{\rm supp}\nolimits}
\newcommand{\coker}{\mathop{\rm coker}\nolimits}
\newcommand{\st}{\mathop{\rm st}\nolimits}
\newcommand{\lk}{\mathop{\rm lk}\nolimits}
\sloppy

\newtheorem{thm}{Theorem}[section]
\newtheorem*{thm*}{Theorem}
\newtheorem{defn}[thm]{Definition}
\newtheorem{prop}[thm]{Proposition}
\newtheorem{pref}[thm]{}
\newtheorem*{prop*}{Proposition}
\newtheorem{conj}[thm]{Conjecture}
\newtheorem{lem}[thm]{Lemma}
\newtheorem{rmk}[thm]{Remark}
\newtheorem{cor}[thm]{Corollary}
\newtheorem{notation}[thm]{Notation}
\newtheorem{exm}[thm]{Example}

\newcommand{\msp}{\renewcommand{\arraystretch}{.5}}
\newcommand{\rsp}{\renewcommand{\arraystretch}{1}}

\newenvironment{lmatrix}{\renewcommand{\arraystretch}{.5}\small
  \begin{pmatrix}} {\end{pmatrix}\renewcommand{\arraystretch}{1}}
\newenvironment{llmatrix}{\renewcommand{\arraystretch}{.5}\scriptsize
  \begin{pmatrix}} {\end{pmatrix}\renewcommand{\arraystretch}{1}}
\newenvironment{larray}{\renewcommand{\arraystretch}{.5}\begin{array}}
  {\end{array}\renewcommand{\arraystretch}{1}}

\title[The Weak Lefschetz property for quotients by Quadratic Monomials]
{The Weak Lefschetz property for quotients by Quadratic Monomials}
\dedicatory{Dedicated to the memory of our friend Tony Geramita}
\author{Juan Migliore}
\address{Migliore: Mathematics Department \\ University of
  Notre Dame \\
    South Bend \\ IN 46566\\USA}
\email{migliore.1@nd.edu}

\author{Uwe Nagel}
\address{Nagel: Mathematics Department \\ University of 
  Kentucky \\
    Lexington \\ KY 40506\\USA}
\email{uwe.nagel@uky.edu}

\author{Hal Schenck}
\address{Schenck: Mathematics Department \\ University of
  Illinois \\
    Urbana \\ IL 61801\\USA}
\email{schenck@math.uiuc.edu}

\subjclass[2010]{13E10, 13F55, 14F05, 13D40, 13C13, 13D02}
\keywords{Weak Lefschetz property, Artinian algebra, quadratic monomials}

\begin{abstract}
\noindent In \cite{MMR}, Micha\l{}ek--Mir\'o-Roig give a beautiful geometric characterization of Artinian quotients by ideals generated by quadratic or cubic
monomials, such that the multiplication map by a general linear form fails
to be injective in the first nontrivial degree. Their work was 
motivated by conjectures of Ilardi \cite{I} and Mezzetti-Mir\'o-Roig-Ottaviani \cite{MMRO}, connecting the failure to Laplace equations and classical results of Togliatti \cite{T1}, \cite{T2} on osculating planes. We study quotients by quadratic monomial ideals, explaining failure of the Weak Lefschetz Property for some cases not covered by \cite{MMR}.
\end{abstract}

\maketitle

\renewcommand{\thethm}{\thesection.\arabic{thm}}
\setcounter{thm}{0}

\section{Introduction}\label{sec:one}
The Hard Lefschetz Theorem \cite{Hodge} is a landmark result in algebraic
topology and geometry: for a smooth $n$-dimensional projective variety $X$, cup
product with the $k^{th}$ power of the hyperplane class $L$ gives an isomorphism
between $H^{n-k}(X)$ and $H^{n+k}(X)$. A consequence of this is that multiplication by a generic linear form is injective up to degree $n$, and surjective in degree $\ge n$. This result places strong constraints on the Hilbert function, and was used to spectacular effect by Stanley \cite{S2} in proving the necessity conditions in McMullen's conjecture \cite{MCM} on the possible numbers of faces of a
simplicial convex polytope. In short, Lefschetz properties are of
importance in algebra, combinatorics, geometry and topology; their
study is a staple in the investigation of graded Artinian algebras.

\begin{defn}
Let $I =\langle f_0, \ldots, f_k \rangle \subseteq S=\KK[x_0,\ldots, x_r]$ be an ideal with $A=S/I$ Artinian. 
Then $A$ has the {\em Weak Lefschetz Property} $($WLP$)$ if there is 
an $\ell \in S_1$ such that for all $i$, the multiplication map 
$\mu_{\ell}: A_i \longrightarrow A_{i+1}$ has maximal rank. If not, we say that $A$ fails WLP in degree $i$. 
\end{defn}
The set of elements $\ell\in S_1$ with the property that the 
multiplication map $\mu_\ell$ has maximum rank is a (possibly empty) 
Zariski open set in $S_1$. Therefore, the existence of the Lefschetz 
element $\ell$ in the definition of the Weak Lefschetz Property 
guarantees that this set is  nonempty and is equivalent to the 
statement that for a general linear form in $S_1$ the 
corresponding multiplication map has full rank. Throughout this paper,
$A$ will denote a standard graded, commutative Artinian algebra, which is
the quotient of a polynomial ring $S$ as above by a homogeneous ideal $I$. 
The Weak Lefschetz Property depends strongly on char($\KK$). For 
quadratic monomial ideals there is a natural connection to topology, and 
homology with $\mathbb{Z}/2$ coefficients plays a central role in understanding WLP.

Results of \cite{HMNW} show that WLP always holds for $r\le 1$ in characteristic zero.
For $r = 2$ WLP sometimes fails, but is known to hold for many classes:
\begin{itemize}
\item Ideals of general forms \cite{A}
\item Complete intersections \cite{HMNW}
\item Ideals with semistable syzygy bundle \cite{BK}
\item Almost complete intersections with unstable syzygy bundle \cite{BK}
\item Level monomial ideals of type 2 \cite{BMMRNZ}, \cite{CN}
\item Ideals generated by powers of linear forms \cite{SS}, \cite{MMN2}
\item Monomial ideals of small type \cite{CN}, \cite{BMMRNZ}
\end{itemize}
Nevertheless, the $r = 2$ case is still not completely understood, and there remain 
intriguing open questions: for example, does every Gorenstein $A$ have WLP? 
For $r \ge 3$ far less is known; it is open if every $A$ which is a complete intersection has WLP. 
The survey paper \cite{MN} contains many questions and
conjectures. Our basic reference for Lefschetz properties is the book
of Harima-Maeno-Morita-Numata-Wachi-Watanabe \cite{HMMNWW}.
In this paper, we study WLP for the class of quadratic monomial
ideals using tools from homology and topology.


\subsection{Laplace equations and Togliatti systems}

One particularly interesting case occurs when the generators of
$I$ are all of the same degree. In this case, since $V(I)$ is empty,
$I$ defines a basepoint free map
\[
\PP^r \stackrel{\phi_{I}}{\longrightarrow}\PP^k,
\]
and it is natural to ask how WLP connects to the geometry of
$\phi_I$. (See \cite{BMMNZ2} for 
an unexpected  connection between this approach and Hesse configurations, in the first non-trivial case for the Gorenstein problem mentioned above.) First, we need some preliminaries.

\begin{defn}
For an $r$ dimensional variety
$X \subseteq \mathbb{P}^m$ and $p \in X$ such that $\mathcal{O}_{X,p}$ has local defining
equations $f_i$, the $d^{th}$ osculating space $\OS$ is the linear
subspace spanned by $p$ and all $\frac{\partial(f_i)}{\partial x^\alpha}(p)$, with $|\alpha| \le d$.
\end{defn}

At a general point $p \in X$, the expected dimension of $\OS$ is 
\[
\min\{m, {r+d \choose d}-1\}.
\]
If for some positive $\delta$ the dimension at a general point is ${r+d \choose d}-1-\delta < m$, then 
$X$ is said to satisfy $\delta$ Laplace equations of order $d$. 
In \cite{T1}, \cite{T2} Togliatti studied such systems, and showed that
the map $\mathbb{P}^2 \rightarrow \mathbb{P}^5$ defined by 
\[
\{xy^2,yx^2,zy^2,yz^2,xz^2,zx^2\}
\]
is the only projection from the triple Veronese surface to $\mathbb{P}^5$ which
satisfies a Laplace equation of order two. In \cite{I}, Ilardi studied 
rational surfaces of sectional genus one in $\mathbb{P}^5$ satisfying a 
Laplace equation, and conjectured that Togliatti's example generalizes to be
the only map to $\mathbb{P}^{r(r+1)-1}$ via cubic monomials with smooth image.

Mezzetti-Mir\'o-Roig-Ottaviani noticed that the ``missing monomials'' which correspond 
to the coordinate points from which the triple Veronese surface is projected are 
\[
\{x^3,y^3,z^3, xyz\}
\]
whose quotient in $\KK[x,y,z]$ was shown in \cite{BK} to fail WLP. The observation led to the paper \cite{MMRO}, where they prove

\begin{thm}\label{MMROthm}\cite{MMRO}
For a monomial ideal $I=\langle f_0,\ldots,f_k\rangle$ generated in degree $d$
with Artinian quotient $A$, let $X$ denote the variety of the  
image of the map to $\mathbb{P}^k$ defined by $I$, and $X^\perp$ 
the variety of image of the complementary map defined by the monomials
of $S_d \setminus I_d$. Then for $k+1 \le {r+d-1 \choose r-1}$, the
following are equivalent:
\begin{itemize}
\item $\mu_l:A_{d-1}\rightarrow A_d$ fails WLP.
\item $\{ f_0,\ldots,f_k \}$ become linearly dependent in $S/l$ for a generic linear form l.
\item The variety $X^\perp$ satisfies at least one Laplace equation of order $d$.
\end{itemize}
\end{thm}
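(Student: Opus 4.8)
The plan is to prove both equivalences by reducing all three conditions to a single statement in degree $d$, namely ``$\ell S_{d-1}\cap I_d\neq 0$ for a general linear form $\ell$.'' Since $I$ is generated in degree $d$ we have $A_{d-1}=S_{d-1}$ and $A_d=S_d/I_d$, and multiplication by $\ell$ is injective on $S_{d-1}$, so $\ker\mu_\ell$ is identified with $\ell S_{d-1}\cap I_d$. A Hilbert function count shows $\dim A_{d-1}\le\dim A_d$ is equivalent to the hypothesis $k+1\le\binom{r+d-1}{r-1}$; hence in this degree ``maximal rank'' means ``injective,'' and $\mu_\ell\colon A_{d-1}\to A_d$ fails to have maximal rank for general $\ell$ precisely when $\ell S_{d-1}\cap I_d\neq0$. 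I will use throughout that the set of $\ell$ for which this intersection is nonzero is Zariski closed (a determinantal locus) and is the complement of the open ``good'' locus, so ``for a general $\ell$'' is unambiguous; the genericity in the other two bullets will be shown to refer to this same locus, so I may freely replace ``general'' by ``every'' or ``some''.

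The first equivalence is then bookkeeping: the $f_i$ are distinct monomials, so $\KK^{k+1}\hookrightarrow S_d$ with image $I_d$, while the reduction map $S_d\to(S/\ell S)_d$ has kernel $\ell S_{d-1}$. Hence $\{f_0,\dots,f_k\}$ becomes $\KK$-linearly dependent modulo $\ell$ exactly when $I_d\cap\ell S_{d-1}\neq0$, which by the previous paragraph is equivalent to the failure of $\mu_\ell$.

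For the third bullet I would translate the geometry of $X^\perp$ into the same statement. Write $V=\langle x^\beta:x^\beta\notin I_d\rangle\subseteq S_d=H^0(\mathbb P^r,\mathcal O(d))$, so $X^\perp=\overline{\phi_V(\mathbb P^r)}$, and let $W_p=\{f\in S_d:\operatorname{ord}_p f\ge d\}$ be the degree-$d$ forms vanishing to order $\ge d$ at $p$, so $S_d/W_p$ is the space of $(d-1)$-jets at $p$, of dimension $\binom{r+d-1}{r}$. First I would recall that the osculating space of $X^\perp$ at $\phi_V(p)$ that is relevant here (the $(d-1)$st --- this is what yields, for forms of degree $d$, the order-$d$ Laplace condition of the statement, since the $d$th osculating space of the Veronese $v_d(\mathbb P^r)$ already fills its ambient $\mathbb P^{\binom{r+d}{r}-1}$) is, as an affine cone in $V^*$, spanned by the jet vectors of the parametrization of order $\le d-1$; its dimension therefore equals the rank of the jet-evaluation map $V\hookrightarrow S_d\twoheadrightarrow S_d/W_p$. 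Because the hypothesis $k+1\le\binom{r+d-1}{r-1}$ is exactly $\dim V\ge\dim(S_d/W_p)$, the osculating space attains its expected dimension at $p$ if and only if this map is surjective, i.e.\ $V+W_p=S_d$; thus $X^\perp$ satisfies at least one Laplace equation (at a general $p$) iff $V+W_p\neq S_d$ for general $p$. Dualizing by apolarity in $R=\KK[y_0,\dots,y_r]$, one has $V^\perp=\langle y^\beta:x^\beta\in I_d\rangle$ and $W_p^\perp=\ell_p\cdot R_{d-1}$ for the linear form $\ell_p\in R_1$ determined by $p$; hence $V+W_p\neq S_d$ iff $\langle y^\beta:x^\beta\in I_d\rangle\cap\ell_p R_{d-1}\neq0$ in $R_d$. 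As $p$, equivalently $\ell_p$, ranges over a dense open set this is word-for-word the condition $\ell S_{d-1}\cap I_d\neq0$ for general $\ell$, closing the loop.

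The hard part is the middle of the last paragraph: identifying the osculating space precisely with the jet-evaluation map, and verifying that ``dimension below the expected value'' corresponds to that map failing to be \emph{surjective} rather than merely failing to be injective. This is the one place where the hypothesis $k+1\le\binom{r+d-1}{r-1}$ is genuinely used, and it is also what determines which osculating space is meant. The remaining pieces --- the identification of $\ker\mu_\ell$, the apolar descriptions of $V^\perp$ and $W_p^\perp$, and the closed/open structure of the relevant loci so that the three ``general'' quantifiers visibly coincide --- are routine, but I would isolate them as short lemmas so that the chain of equivalences is transparent.
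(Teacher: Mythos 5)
The theorem is stated in the paper as a citation to \cite{MMRO} (Mezzetti--Mir\'o-Roig--Ottaviani) and no proof of it appears in the paper itself, so there is no in-paper argument to compare against; I am therefore evaluating your sketch on its own terms. Read that way, your outline is essentially the original MMRO argument: funnel all three conditions into the single linear-algebra statement ``$\ell S_{d-1}\cap I_d\neq 0$ for a general $\ell$.'' The identification of $\ker\mu_\ell$ with $\ell S_{d-1}\cap I_d$ (using that multiplication by $\ell$ is injective on $S_{d-1}$), the translation of $k+1\leq\binom{r+d-1}{r-1}$ into $\dim A_{d-1}\leq\dim A_d$ via Pascal's rule, the reduction of the second bullet through $\ker\bigl(S_d\to(S/\ell)_d\bigr)=\ell S_{d-1}$, and the apolarity dualization $V^\perp=\langle y^\beta:x^\beta\in I_d\rangle$, $W_p^\perp=\ell_p R_{d-1}$ for the third bullet are all correct. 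Your remark that the closed determinantal locus makes the three ``general'' quantifiers coincide is also right and worth keeping as a separate lemma, as you suggest.

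One point to make cleaner rather than hedge around: as stated in this paper the Laplace equation is said to be ``of order $d$,'' but with the paper's own definition of osculating space (partials with $|\alpha|\le d$), the condition equivalent to failure of injectivity in degree $d-1$ is a Laplace equation of order $d-1$, which is also what the original MMRO theorem asserts. Your computation correctly uses the $(d-1)$st osculating space and the $(d-1)$-jets $S_d/W_p$ of dimension $\binom{r+d-1}{r}$, so the mathematics is right, but the parenthetical trying to reconcile the statement's ``order $d$'' with your ``$(d-1)$st osculating space'' muddies the exposition. It would be better to flag the mismatch once and then work consistently with $T^{d-1}(X^\perp,p)$. Your observation about where the numerical hypothesis actually enters --- it gives $\dim V\geq\dim(S_d/W_p)$, so that full osculation is equivalent to \emph{surjectivity} of the jet-evaluation map $V\to S_d/W_p$ rather than injectivity --- is correct and is indeed the only place the inequality is needed in bullet three.
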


\begin{rmk}
The numerical hypothesis in Theorem \ref{MMROthm} is equivalent to the condition that $\dim A_{d-1} \leq \dim A_d$; the condition that $\mu_l$ fails WLP is only an assertion that injectivity fails. The classification theorem for injectivity  in Theorem \ref{MMRthm} below does not address surjectivity, and is one motivation for this paper.
\end{rmk}

The varieties $X$ and $X^\perp$ are often called polar varieties, and
can be defined more generally using inverse systems, but we will not need that
here. Mezzetti-Mir\'o-Roig-Ottaviani used Theorem~\ref{MMROthm} 
to produce counterexamples to Ilardi's conjecture. Building on this work, 
Micha\l{}ek and Mir\'o-Roig showed in \cite{MMR} that a suitable modification of Ilardi's conjecture 
is true. A monomial system is {\em Togliatti}  if it
satisfies the equivalent conditions of Theorem~\ref{MMROthm}, {\em smooth} if
$X^\perp$ is smooth, and {\em minimal} if no proper subset satisfies
the equivalent conditions of Theorem~\ref{MMROthm}. For quadrics, Micha\l{}ek-Mir\'o-Roig classify the failure of injectivity in degree one:

\begin{thm}\label{MMRthm}\cite{MMR}
A smooth, monomial, minimial Togliatti system of quadrics $I \subseteq S$ is (after reindexing) $I= \langle x_0,\ldots, x_i\rangle^2 + \langle
x_{i+1},\ldots,x_r\rangle^2$, with $1 \le i \le r-2.$
\end{thm}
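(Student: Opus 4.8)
The plan is to trade the geometry for a single graph. Let $M \subseteq S_2$ be the set of quadratic monomials generating $I$; since $I_1 = 0$ these are its minimal generators, so $k+1 = |M|$, and since $A$ is Artinian the coordinate point $[0:\cdots:1:\cdots:0]$ must lie off $V(I)$, which forces $x_s^2 \in M$ for every $s$. Attach to $I$ the loopless graph $H$ on $\{0,1,\ldots,r\}$ with an edge $\{s,t\}$ exactly when $x_sx_t \notin M$. The edges of $H$ are then precisely the monomials of $S_2\setminus M$ defining the complementary map, so $\dim_\KK A_2 = |E(H)|$ and the numerical hypothesis $k+1 \le \binom{r+1}{2}$ of Theorem~\ref{MMROthm} becomes $|E(H)| \ge r+1$.

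The first real step is to compute $\ker\mu_\ell$ for a generic $\ell = \sum \ell_s x_s$ (so all $\ell_s \ne 0$): writing $g = \sum a_s x_s$, one has $g \in \ker\mu_\ell$ iff $\ell g \in I_2$, i.e.\ the coefficient $\ell_s a_t + \ell_t a_s$ of every edge-monomial $x_sx_t$ vanishes. The substitution $a_s = \ell_s b_s$ turns this into $b_s + b_t = 0$ along every edge of $H$ --- a condition with no trace of $\ell$, and (in characteristic $\neq 2$) one that asks $b$ to alternate in sign along edges. Hence $\ker\mu_\ell$ is isomorphic to the space of such $b\colon V(H) \to \KK$, whose dimension is the number of bipartite connected components of $H$ (an isolated vertex contributes $1$; a bipartite component with an edge contributes a one-parameter alternating labelling; a component containing an odd cycle forces $b\equiv 0$ there). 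Since under the numerical hypothesis WLP fails exactly when injectivity fails (Theorem~\ref{MMROthm} and the remark following it), this says: $I$ is Togliatti iff $H$ has a bipartite component and $|E(H)| \ge r+1$.

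Next, minimality becomes graph surgery. A proper subset of the generators that still defines an Artinian ideal is obtained precisely by deleting some mixed monomials $x_sx_t$ from $M$ --- equivalently by adding the corresponding edges to $H$, which only raises $|E(H)|$ --- so $I$ is minimal iff $H\cup\{s,t\}$ has no bipartite component for every non-edge $\{s,t\}$. Tracking how adding one edge affects the bipartite components (a cross-edge between two bipartite components, or between the two sides of one bipartite component, preserves a bipartite component; an edge inside or between non-bipartite components leaves the bipartite part intact) forces $H$ to have a unique bipartite component, equal to a complete bipartite graph $K_{A,B}$, together with at most one further component, necessarily a complete graph $K_m$, $m\ge 3$. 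So the minimal Togliatti monomial systems of quadrics are exactly $H = K_{A,B}$ or $H = K_{A,B}\sqcup K_m$, with sizes making $|E(H)|\ge r+1$ (which forces $|A|,|B|\ge 2$ in the first case). Smoothness now separates the two: for $H = K_{A,B}$ the variety $X^\perp$ is the image of $\PP^r$ under all the $x_ax_b$, $a\in A$, $b\in B$, i.e.\ the Segre variety $\PP^{|A|-1}\times\PP^{|B|-1}$, smooth; for $H = K_{A,B}\sqcup K_m$ the coordinates split into two disjoint blocks and $X^\perp$ is the join of that Segre variety with the quadratic Veronese of $\PP^{m-1}$, singular along each factor (through each point of the Segre part runs a cone over the nonlinear Veronese). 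Thus smoothness forces $H = K_{A,B}$ with $|A|,|B|\ge 2$; reindexing $A = \{0,\ldots,i\}$, $B = \{i+1,\ldots,r\}$ gives $I = \langle x_0,\ldots,x_i\rangle^2 + \langle x_{i+1},\ldots,x_r\rangle^2$, and $|A|,|B|\ge 2$ is exactly $1\le i\le r-2$; the converse follows from the same computations.

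I expect the minimality analysis to be the main obstacle. One must first fix the correct reading of ``minimal'' --- a proper subset that is not Artinian cannot be a Togliatti system, so only the mixed monomials are ever in play --- and then handle disconnected $H$ carefully: two distinct bipartite components, or a bipartite component that is not complete bipartite, can always be ``rescued'' by adding a single edge, so such graphs are never minimal, and it is the elimination of the surviving family $K_{A,B}\sqcup K_m$ that genuinely uses the smoothness hypothesis. A smaller point to pin down is that the genericity of $\ell$ hides nothing: the alternating condition $b_s+b_t = 0$ does not involve $\ell$, so $\dim\ker\mu_\ell$ is constant for all $\ell$ with nonvanishing coordinates, and since the maximal-rank locus is open, failure for a generic such $\ell$ is genuine failure of WLP.
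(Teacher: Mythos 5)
The paper does not prove this theorem; it is quoted from \cite{MMR}, where the argument runs through Perkinson's classification of osculating spaces of toric varieties in terms of lattice points. Your route is genuinely different and more elementary. The graph $H$ you attach to $I$ (edges $=$ missing mixed monomials) is exactly the $1$-skeleton of the simplicial complex $\Delta$ that the paper introduces in Section 2, so you are in effect rediscovering the paper's combinatorial framework for degree $1$. Your kernel computation is the nicest step: the substitution $a_s=\ell_s b_s$ converts $\ker(\mu_\ell\colon A_1\to A_2)$ into sign-alternating labellings of $H$, of dimension the number of bipartite components. This is a clean characteristic-zero companion to the paper's $\Z/2$ observations (Lemma 3.1 and the Bockstein argument of Proposition 3.5), which the paper never carries out explicitly for this case; in characteristic $2$ bipartite would be replaced by connected, so the implicit assumption $\mathrm{char}\,\KK\neq 2$ should be stated.

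Two steps need to be tightened before this is a proof. First, the reduction of minimality to single-edge additions is asserted, not argued. It is true, but one must note that adding an edge cannot \emph{create} a bipartite component: if $H\cup\{e\}$ has a bipartite component $C'$, then either $e\notin C'$ and $C'$ was already a bipartite component of $H$, or $e\in C'$ and $C'\setminus e$ consists of one or two bipartite components of $H$; the contrapositive then lets you peel off edges of any $D$ one at a time. Second, the smoothness step is the shakiest and is where you actually use the hypothesis, so it deserves the most care. The image of the clique block is not the quadratic Veronese of $\PP^{m-1}$ but its projection away from the $m$ coordinate points $[x_c^2]$ (the squares lie in $I$, so are absent from the complementary system), and these centres of projection lie \emph{on} the Veronese. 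The ``join'' description of $X^\perp$ is essentially right, but the asserted singularity does not follow uniformly: when $m=3$ the mixed-products map is the Cremona $\PP^2\to\PP^2$, whose image is a linear $\PP^2$, and when $|A|\le 1$ (an allowed degeneration for $H=K_{A,B}\sqcup K_m$, including the isolated-vertex case $B=\emptyset$) the Segre factor is likewise linear or empty. In those cases the join is a cone with linear vertex or has no join structure at all, and you need a separate argument (for example, checking the Jacobian of the defining Pl\"ucker-type quadrics at the coordinate points, which does show singularity for $m\ge 4$). The numerical inequality $|E(H)|\ge r+1$ excludes some but not all of these degenerations, so the case analysis cannot be compressed into a single sentence.
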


Maps defined by monomials of degree $d$ are toric, and a key
ingredient in \cite{MMR} and \cite{MMRO} is Perkinson's classification in \cite{P}
of osculating spaces in terms of the lattice points corresponding to monomials.


\subsection{Results of this paper}
For $\KK[x_0,\ldots,x_5]$ all failures of WLP for quadratic monomial quotients are consequences of Theorems~\ref{MMROthm} and~\ref{MMRthm}, except Examples~\ref{HasWLP} and ~\ref{NoWLP} below and Example~\ref{failfromses}. Failure of WLP for all these cases is explained by our results.

\begin{exm}\label{HasWLP}
Theorem~\ref{MMRthm} gives necessary and sufficient conditions for
failure of WLP for $I$ quadratic if $k+1 \le {r+1 \choose 2}$, since in this range $\mu_l$ cannot be surjective in degree one. For $k+1 > {r+1 \choose 2}$, WLP can still hold if $\mu_l$ is surjective. When $r=5$, $\mu_l$ is
surjective and WLP holds for every Artinian monomial ideal with $k+1 \in \{16, \ldots, 21\}$, except the ideal of all quadratic monomials save (up to relabelling) $\{ x_0x_2, x_0x_3, x_1x_2, x_1x_3\}$, where surjectivity fails from degree 1 to degree 2. 
\end{exm}

\begin{exm}\label{NoWLP} Let $I= \langle x_0^2, x_1^2,x_2^2,x_3^2,x_4^2,x_5^2, x_0x_1, x_2x_3, x_4x_5\rangle
\subseteq \KK[x_0,\ldots, x_5]$; $S/I$ 
has Hilbert series $(1,6,12,8)$, and fails WLP (surjectivity) in degree two.
\end{exm}
By our Proposition~\ref{WLPses}, WLP fails for Example~\ref{HasWLP}, and by our Proposition~\ref{TopFailLift}, WLP fails for Example~\ref{NoWLP}. Failure in both examples also follows from Theorem~\ref{Ttensor}, which generalizes Theorem~\ref{MMRthm}. The following results will be useful:
\begin{prop}\label{injsurj}\cite{MMN}
If $\mu_l: A_i \twoheadrightarrow A_{i+1}$, then it is surjective for
all $j \ge i$. If $A$ is level, and if $\mu_l: A_i \hookrightarrow A_{i+1}$, then it is injective for all $j \le i$.
\end{prop}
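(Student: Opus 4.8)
The plan is to deduce both statements from a single "persistence" principle: once the map $\mu_\ell$ has maximal rank in one degree, the rank behaviour is forced in neighbouring degrees by the module structure of $A$ over the subring $\KK[\ell]$. For the surjectivity claim, suppose $\mu_\ell : A_i \twoheadrightarrow A_{i+1}$ is surjective. I would first observe that $\bar A := A/\ell A$ is a graded algebra with $\bar A_{i+1} = A_{i+1}/\ell A_i = 0$, hence $\bar A_j = 0$ for all $j \ge i+1$ since $\bar A$ is generated in degree $0$ and $1$ (indeed $\bar A_j = \bar A_1^{\,j-i}\cdot \bar A_i$ — more carefully, $\bar A_{j} = \bar S_1 \cdot \bar A_{j-1}$, so vanishing propagates upward). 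Unwinding $\bar A_j = 0$ gives $A_j = \ell A_{j-1}$ for all $j \ge i+1$, which is exactly surjectivity of $\mu_\ell : A_{j-1} \to A_j$ for all $j-1 \ge i$. This step is essentially immediate once the right quotient is introduced.

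For the injectivity claim, assume $A$ is level and $\mu_\ell : A_i \hookrightarrow A_{i+1}$ is injective; I want injectivity of $\mu_\ell : A_{j} \to A_{j+1}$ for all $j \le i$. The idea is to use Matlis duality: for a level Artinian algebra $A$ with socle degree $e$, the graded $\KK$-dual $A^\vee = \Hom_\KK(A,\KK)$ is a module generated in a single degree, and multiplication by $\ell$ on $A^\vee$ is, degree by degree, the transpose of multiplication by $\ell$ on $A$. Concretely, the map $\mu_\ell : A_j \to A_{j+1}$ is injective if and only if its transpose $(A_{j+1})^\vee \to (A_j)^\vee$, which is multiplication by $\ell$ on $A^\vee$ from degree $e-1-j$ to degree $e-1-(j+1)$... let me instead argue directly. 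Injectivity of $\mu_\ell$ on $A_i$ means the component of $\Ann_A(\ell)$ in degree $i$ vanishes. Since $A$ is level, its socle is concentrated in the top degree $e$; if $\Ann_A(\ell)$ were nonzero in some degree $j < i$, pick a nonzero homogeneous $a \in A_j$ with $\ell a = 0$ and $a$ of maximal degree with this property among degrees $\le i$ — but this does not obviously contradict anything. The clean route is duality: set $B = A^\vee$, a module over $A$ generated in degree $0$ by a single element (levelness $\Leftrightarrow$ $\mathrm{socle}(A)$ pure $\Leftrightarrow$ $B$ generated in one degree), graded so that $B_{-k} \cong (A_{e-k})^\vee$ up to a shift. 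Then $\mu_\ell$ on $A$ in degrees $(j,j+1)$ is dual to $\mu_\ell$ on $B$ in the corresponding shifted degrees, and surjectivity of $\mu_\ell$ on $A$ from degree $i$ downward (which we do \emph{not} have) is the wrong pairing; rather, \emph{injectivity of $\mu_\ell$ on $A_i$} is equivalent to \emph{surjectivity of $\mu_\ell$ on $B$} into the dual spot, and then the first part of the proposition applied to the cyclic-like module $B$ propagates that surjectivity, which dualises back to injectivity of $\mu_\ell : A_j \to A_{j+1}$ for all $j \le i$.

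The main obstacle I anticipate is getting the duality bookkeeping exactly right: one must check that $A^\vee$, for a \emph{level} Artinian $A$, is generated in a single degree as an $A$-module, and that the upward-propagation argument from the first part applies verbatim to a module generated in one degree (not just to a cyclic algebra). The first point is standard (it is the definition of level phrased via Matlis duality, since $\mathrm{socle}(A)^\vee$ is the minimal generating space of $A^\vee$); the second requires only that $B = A \cdot B_0$, so that $\ell B_{k} = 0 \Rightarrow \ell B_{k'} = 0$ for all "later" $k'$ — here "later" is in the direction of decreasing internal degree because $B$ sits in nonpositive degrees, which is why levelness (rather than the Gorenstein-type hypothesis needed for the full Strong Lefschetz persistence) suffices. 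Once this is set up correctly, both halves of the proposition follow from the same one-line propagation lemma applied to $A$ and to $A^\vee$ respectively.
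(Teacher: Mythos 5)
Your argument is correct and is essentially the standard proof of this result, which is also what the cited source \cite{MMN} does: surjectivity propagates upward because $\bar A = A/\ell A$ is a standard graded algebra, so $\bar A_{i+1}=0$ forces $\bar A_j = 0$ for all $j\ge i+1$; and for a level algebra, Matlis duality turns $A^\vee$ into a module generated in a single degree, so injectivity of $\mu_\ell$ on $A_i$ dualises to surjectivity on $A^\vee$ in the corresponding degree and the same one-line propagation (now for a module generated in one degree rather than a cyclic algebra) gives injectivity for all $j\le i$. One small slip in your write-up: you write ``$\ell B_k = 0 \Rightarrow \ell B_{k'} = 0$ for later $k'$,'' but the propagation you actually need and correctly invoke earlier is $B_k = \ell B_{k-1} \Rightarrow B_{k+1}=\ell B_k$ (valid once $k$ is at or above the generating degree); the annihilator phrasing is a different statement and would not give what you want.
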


\begin{prop}\label{sum}\cite{MMN}
For a monomial ideal $I$, the form $l=\sum_{i=0}^r x_i$ is generic.
\end{prop}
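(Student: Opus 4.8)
The plan is to exploit the torus action on $A$ coming from the hypothesis that $I$ is monomial. Let $T = (\KK^{*})^{r+1}$ act on $S = \KK[x_0,\ldots,x_r]$ by $t \cdot x_i = t_i x_i$. Since $I$ is generated by monomials it is $T$-stable, so the action descends to $A = S/I$, and for each $t \in T$ the induced map $\varphi_t \colon A \to A$ is a graded $\KK$-algebra automorphism.

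The key observation is that for any linear form $\ell = \sum_{i=0}^{r} c_i x_i$ and any $t \in T$ one has $\varphi_t(\ell) = \sum_i c_i t_i x_i$, and, because $\varphi_t$ is a ring homomorphism, the square relating $\mu_\ell$ and $\mu_{\varphi_t(\ell)}$ commutes: $\mu_{\varphi_t(\ell)} \circ \varphi_t = \varphi_t \circ \mu_\ell$ in each degree. Since $\varphi_t$ is an isomorphism, $\mu_{\varphi_t(\ell)}$ has the same rank as $\mu_\ell$ on $A_i$ for every $i$. In particular, if $\ell_0 = \sum_i b_i x_i$ has all $b_i \neq 0$, then taking $t_i = 1/b_i$ shows that $\mu_{\ell_0}$ has the same rank in every degree as $\mu_{l}$, where $l = \sum_i x_i$.

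It remains to see that $l$ realizes the maximal rank attained by any linear form, in every degree. Fix $i$ and let $\rho_i$ be the maximum of $\rk \mu_\ell$ (on $A_i$) over all $\ell \in S_1$; the locus $U_i \subseteq S_1 \cong \affine{r+1}$ where this maximum is attained is Zariski open and nonempty, and since $A$ is Artinian there are only finitely many relevant $i$, so $U := \bigcap_i U_i$ is again open and nonempty. The set $V \subseteq S_1$ of linear forms with all coefficients nonzero is open and dense; as $S_1$ is irreducible, $U \cap V \neq \emptyset$. Choosing $\ell_0 \in U \cap V$, the previous paragraph gives $\rk \mu_{l} = \rk \mu_{\ell_0} = \rho_i$ on $A_i$ for every $i$, i.e.\ $l$ attains the generic rank in every degree. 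Hence $\mu_l$ has maximal rank in degree $i$ if and only if a general linear form does, which is the assertion.

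I do not anticipate a genuine obstacle here: the proof is essentially the observation that the torus orbit of $\sum x_i$ is the dense set of full-support linear forms, combined with the equivariance of the multiplication maps. The only point needing a little care is the bookkeeping that ``generic'' may be verified one degree at a time and that the finitely many open conditions can be imposed simultaneously, together with the elementary fact that two nonempty Zariski-open subsets of the irreducible space $S_1$ must meet.
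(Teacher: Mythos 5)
Your argument is correct and is essentially the standard one from the cited reference \cite{MMN}: because $I$ is monomial, the diagonal torus acts on $A$ by graded automorphisms, so $\ell = \sum x_i$ has the same multiplication ranks as every linear form with full support, and the full-support locus meets the (nonempty, Zariski-open, finite intersection of) maximal-rank loci since $S_1$ is irreducible. Note the paper does not prove the proposition itself but cites \cite{MMN}, where this same torus-equivariance argument appears.
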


\section{Squarefree monomial ideals}
Let $I$ be a monomial ideal of the form $I = J'+J_\Delta$, where $J'$ is the ideal consisting of squares
of variables, and $J_\Delta$ is a squarefree monomial ideal;
$J_\Delta$ defines a Stanley-Reisner ideal 
corresponding to a simplicial complex $\Delta$. 
\begin{lem}
    \label{lem:h is f-vector}
If $A=S/I$ for $I$ as above, then the Hilbert
series of $A$ is
\[
HS(A,t)=\sum f_{i-1}(\Delta)t^i.
\]
\end{lem}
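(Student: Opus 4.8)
The plan is to identify a $\KK$-vector space basis of $A = S/I$ in each degree and show that the basis elements in degree $i$ are in bijection with the $(i-1)$-dimensional faces of $\Delta$, so that $\dim_\KK A_i = f_{i-1}(\Delta)$, which is exactly the coefficient of $t^i$ in $\sum f_{i-1}(\Delta) t^i$.

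First I would recall that the monomials in $S$ not lying in $I$ form a $\KK$-basis of $A$. Since $I = J' + J_\Delta$ contains all squares $x_j^2$, any monomial surviving in $A$ must be squarefree. So a monomial $x^\alpha$ is nonzero in $A$ if and only if $\alpha$ is a $0/1$ vector, i.e. $x^\alpha = \prod_{j \in F} x_j$ for some subset $F \subseteq \{0,\dots,r\}$, and additionally $x^\alpha \notin J_\Delta$. By the definition of the Stanley–Reisner ideal, $\prod_{j\in F} x_j \in J_\Delta$ precisely when $F$ is a non-face of $\Delta$; hence the surviving squarefree monomials correspond bijectively to the faces $F \in \Delta$. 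The degree of such a monomial is $|F|$, and a face with $|F| = i$ elements is by convention an $(i-1)$-dimensional face, counted by $f_{i-1}(\Delta)$. Summing over $i \geq 0$ (with $f_{-1}(\Delta) = 1$ accounting for the empty face and the degree-zero part $A_0 = \KK$) gives $HS(A,t) = \sum_i f_{i-1}(\Delta) t^i$.

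I do not anticipate a genuine obstacle here; the statement is essentially the observation that $A$ is the "Stanley–Reisner ring of $\Delta$ made squarefree-Artinian" and that its Hilbert function literally reads off the $f$-vector. The only point requiring a modicum of care is the bookkeeping at the low-degree end — making sure the empty face contributes the $1$ in degree $0$ and that the correspondence "monomial of degree $i$ $\leftrightarrow$ face of cardinality $i$ $\leftrightarrow$ face of dimension $i-1$" is stated with the right index shift — together with confirming that no monomial of $S$ that is squarefree and supported on a face of $\Delta$ can accidentally lie in $J'$ (it cannot, since $J'$ is generated by the non-squarefree monomials $x_j^2$). Once the basis is pinned down, the formula for the Hilbert series is immediate.
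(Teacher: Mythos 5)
Your proof is correct and captures the same idea as the paper's: the paper cites the Hilbert series of the exterior Stanley--Reisner ring $E/I_\Delta$ and notes $A$ has the same Hilbert series, while you unwind that comparison into the direct monomial count (squarefree monomials surviving in $A$ correspond bijectively to faces of $\Delta$, with a degree-$i$ monomial corresponding to an $(i-1)$-face). This is the standard argument underlying the paper's cited fact, so the two proofs are essentially the same with yours spelled out at a more elementary level.
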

\begin{proof}
For the exterior Stanley-Reisner ring $E/I_\Delta$, $HS(E/I_\Delta,t)=
\sum f_{i-1}(\Delta)t^i$,  and it is clear that
$HS(A,t)=HS(E/I_\Delta,t)$, with $HF(A,i)=f_{i-1}$.
\end{proof}

\begin{cor}\label{levelCor}
With the notation of Lemma \ref{lem:h is f-vector}, the algebra $A$ is level if and only if the flag complex $\Delta$ is pure. 
\end{cor}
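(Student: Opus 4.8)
The plan is to unwind the definitions and reduce the claim to a standard fact about Cohen--Macaulay-type behavior of Stanley--Reisner data. By Lemma~\ref{lem:h is f-vector}, $HF(A,i) = f_{i-1}(\Delta)$, so the Hilbert function of $A$ is governed entirely by the face numbers of the flag complex $\Delta$ (here $\Delta$ is a flag complex precisely because $J_\Delta$ is generated by squarefree \emph{quadratic} monomials, i.e. $\Delta$ is the clique complex of its $1$-skeleton). First I would record that $A$ being level means its top-degree component sits in the socle and the socle is concentrated in that single degree; equivalently, the minimal free resolution of $A$ over $S$ has its last module generated in a single degree. So the task is to translate ``$\Delta$ pure'' into this socle statement.

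The key step is to identify the socle of $A$ directly in terms of faces of $\Delta$. Because $I = J' + J_\Delta$ with $J'$ the squares of all variables, $A$ has a $\KK$-basis given by the squarefree monomials $x_F := \prod_{i \in F} x_i$ as $F$ ranges over faces of $\Delta$, and multiplication behaves like the exterior face ring: $x_j \cdot x_F = x_{F \cup \{j\}}$ if $j \notin F$ and $F \cup \{j\} \in \Delta$, and $0$ otherwise (both because $x_j^2 = 0$ and because $x_j x_F \in J_\Delta$ when $F\cup\{j\}\notin\Delta$). Hence $x_F$ lies in the socle of $A$ if and only if for every vertex $j \notin F$ we have $F \cup \{j\} \notin \Delta$, which says exactly that $F$ is a \emph{facet} (maximal face) of $\Delta$. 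Therefore the socle of $A$ is spanned by $\{x_F : F \text{ a facet of } \Delta\}$, and its graded pieces are $\mathrm{soc}(A)_i = \langle x_F : F \text{ a facet},\ |F| = i\rangle$. (One should be slightly careful at $i=0$: the element $1$ is in the socle only if $\Delta$ has no vertices, a degenerate case, and $1$ is a facet exactly when $\Delta = \{\emptyset\}$; otherwise every facet has positive cardinality.)

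From this description the corollary is immediate: $A$ is level iff $\mathrm{soc}(A)$ is concentrated in a single degree, iff all facets of $\Delta$ have the same cardinality, iff $\Delta$ is pure. I would present the computation of $\mathrm{soc}(A)$ as the heart of the proof and then state the equivalence in one line.

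I do not anticipate a serious obstacle; the only thing requiring a little care is the bookkeeping in the socle computation --- making sure that the vanishing $x_j x_F = 0$ is attributed correctly (to $J'$ when $j \in F$, and to $J_\Delta$ when $j \notin F$ but $F \cup \{j\} \notin \Delta$) --- and flagging the trivial edge cases (the empty complex, or a complex on an empty vertex set) so the statement ``facet cardinalities all equal'' literally matches the definition of purity being used. If one prefers to avoid the socle language entirely, an alternative is to invoke the fact that an Artinian algebra $A$ (with the convention that $A_0$ is not itself the socle) is level iff its socle lies in one degree, and then the same face-ring computation applies; either route is short.
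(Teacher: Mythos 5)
Your proof is correct, and it takes a noticeably more direct route than the paper's. The paper proceeds via inverse systems: it defines $I'$ as the annihilator (under contraction) of the facet monomials $m_1,\ldots,m_t$, observes $I \subseteq I'$, and then uses Lemma~\ref{lem:h is f-vector} to match Hilbert functions and force $I = I'$, concluding that the inverse system of $A$ is exactly the face monomials of $\Delta$ and hence generated by the facets; levelness is then read off as ``inverse system generated in a single degree.'' You instead compute the socle of $A$ head-on: using the exterior-face-ring multiplication ($x_j \cdot x_F = x_{F\cup\{j\}}$ or $0$), you show $x_F \in \mathrm{soc}(A)$ iff $F$ is a facet, so $\mathrm{soc}(A)$ is concentrated in one degree iff $\Delta$ is pure. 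The two arguments encode the same underlying fact (the inverse system description is dual to the socle description), but yours avoids both the Hilbert-function comparison and the inverse-systems formalism, which makes it shorter and more self-contained; the paper's version has the advantage of producing, as a byproduct, the explicit inverse system generators $m_1,\ldots,m_t$ of $A$, which is a slightly stronger statement than just identifying the socle degrees. Your care with the degenerate cases ($\Delta = \{\emptyset\}$, empty vertex set) is reasonable bookkeeping and does not affect the equivalence.
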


\begin{proof}
Denote by $m_1,\ldots,m_t$ the monomials corresponding to the facets of $\Delta$, that is, each $m_i$ is the product of the variables corresponding to the vertices of the $i$-th facet. Let $I'$ be the annihilator of $m_1,\ldots,m_t$ in the sense of inverse systems, that is, $I'$ consists of the monomials that annihilate each of the monomials $m_1,\ldots,m_t$ under contraction. The minimal generators of $J_{\Delta}$ correspond to minimal non-faces of $\Delta$. Hence, they are in $I'$. Clearly, $J'$ is in $I'$, which gives $I \subset I'$. 

The $(i-1)$-dimensional faces of $\Delta$ correspond to divisors of one of the monomials $m_1,\ldots,m_t$. Thus,  $f_{i-1} (\Delta)$ is equal to  the Hilbert function of $R/I'$ in degree $i$. Hence Lemma \ref{lem:h is f-vector} gives that $A$ and $R/I'$ have the same Hilbert function, which forces $I = I'$. Thus, we have shown that the inverse system of $A$ is generated by the monomials corresponding to the faces of $\Delta$. Now the claim follows. 
\end{proof}

When $A$ is an Artinian quotient by a quadratic monomial ideal $I$, 
then $I$ always has a decomposition as above, and $\Delta$ is flag: it is defined by the non-edges, so is encoded by a graph. Removing a vertex or
an edge from the graph gives rise to a short exact sequence, 
yielding an inductive tool to study WLP for quadratic monomial ideals.
\subsection{Removing an edge or vertex}
Let $e_{ij}$ be the edge corresponding to monomial $x_ix_j$, and 
$v_i$ the vertex corresponding to the variable $x_i$. Write
$S'$ for a quotient of $S$ by some set of variables, which will be
apparent from the context, and $J'$ for the ideal of the squares of
the variables in $S'$. For a face $\sigma \in \Delta$, $\st(\sigma) =
\{ \tau \in \Delta \mid \sigma \subseteq \tau \}$ and $\lk(\sigma) = \partial(\st(\sigma))$.
\begin{lem}\label{sesVertex}
For the short exact sequence
\[
0 \longrightarrow S(-1)/(I:x_i) \stackrel{\cdot x_i}{\longrightarrow} 
S/I \longrightarrow S/(I+x_i) \longrightarrow 0,
\]
we have
\[
\begin{array}{ccc}
S/(I+x_i)  & \simeq & S'/(J'+J_{\Delta'} )\\
S/(I:x_i)  & \simeq & S'/(J'+J_{\Delta''})
\end{array}
\]
where $\Delta' = \Delta \setminus \st(v_i)$, and $\Delta'' = \lk(v_i)$. 
\end{lem}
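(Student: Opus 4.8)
The plan is to verify the two isomorphisms by tracking what the monomial ideals become under the two operations on $S$, and then matching the resulting Stanley–Reisner data with the combinatorial operations $\Delta \setminus \st(v_i)$ and $\lk(v_i)$ on the flag complex. The short exact sequence itself is the standard one associated to multiplication by $x_i$, so nothing needs to be checked there; the content is entirely in identifying the two outer terms.

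First I would handle $S/(I+x_i)$. Adding $x_i$ to $I$ and passing to $S' = S/(x_i)$ simply deletes the variable $x_i$; the square $x_i^2$ and every edge monomial $x_ix_j$ disappear, so what survives is $J'$ (the squares of the remaining variables) together with those squarefree generators of $J_\Delta$ not involving $x_i$. On the combinatorial side, the faces of $\Delta$ that remain after we impose $x_i = 0$ are exactly the faces not containing $v_i$, i.e. the complex $\Delta \setminus \st(v_i)$; since $\Delta$ is flag, this is again a flag complex whose Stanley–Reisner ideal is generated by the non-edges among the surviving vertices, matching $J_{\Delta'}$. So $S/(I+x_i) \simeq S'/(J' + J_{\Delta'})$ with $\Delta' = \Delta \setminus \st(v_i)$.

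Next I would compute the colon ideal $(I : x_i)$. Among the generators of $I$: $x_i^2 : x_i = x_i$, so $x_i \in (I:x_i)$; each edge $x_ix_j : x_i = x_j$, so every variable $x_j$ adjacent to $v_i$ in the graph lies in $(I:x_i)$; and the remaining generators of $I$ (squares $x_k^2$ and non-edges not involving $v_i$) are coprime to $x_i$, hence contribute themselves. After quotienting by $x_i$ and all the $x_j$ with $j$ adjacent to $i$ — that is, passing to the $S'$ for which the vertex set is $V(\lk(v_i))$ — what is left is $J'$ for that smaller variable set together with exactly the non-edges among the neighbours of $v_i$, which is precisely $J_{\lk(v_i)}$. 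Thus $S/(I:x_i) \simeq S'/(J' + J_{\Delta''})$ with $\Delta'' = \lk(v_i)$, up to the degree shift already recorded in the sequence. I should double-check the edge case where $v_i$ is isolated, in which $(I:x_i)$ contains no new variables and $\lk(v_i)$ is the empty complex $\{\emptyset\}$, so the statement still reads correctly.

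The only mild subtlety — and the step I would be most careful about — is the bookkeeping of which variables get killed in each case and confirming that the Stanley–Reisner ideal of the induced subcomplex is generated by the induced non-edges; this uses flagness of $\Delta$ essentially, since for a general simplicial complex the minimal non-faces of a link or deletion are not simply the induced ones. Because $\Delta$ is a flag complex (non-faces are generated by non-edges), both $\Delta \setminus \st(v_i)$ and $\lk(v_i)$ are again flag on their respective vertex sets, so each corresponds to a quadratic squarefree ideal of the expected form, and the isomorphisms follow by comparing generators.
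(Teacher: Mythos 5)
Your argument follows the same route as the paper: identify both quotient ideals explicitly, then match them to the deletion $\Delta\setminus\st(v_i)$ and link $\lk(v_i)$; both are correct. One wording slip worth fixing: when you compute $(I:x_i)$ you write that ``every variable $x_j$ adjacent to $v_i$ in the graph lies in $(I:x_i)$'', but under the paper's convention (where $\Delta$ is the flag complex of a graph $G$ and the generators of $J_\Delta$ are the \emph{non-edges} of $G$) the variables forced into the colon ideal are precisely those $x_j$ with $x_ix_j\in I$, i.e.\ the vertices \emph{not} adjacent to $v_i$ in $G$; killing $x_i$ together with those $x_j$ is what leaves exactly the neighbours of $v_i$, the vertex set of $\lk(v_i)$. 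Your final conclusion and the remark about the isolated-vertex edge case are fine, but the intermediate sentence as written inverts the adjacency and could mislead a reader.
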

\begin{proof}
In $(I+x_i)$, all quadrics in $I$ divisible by $x_i$ are
nonminimal, so the remaining quadrics are those not involving
$x_i$, which are exactly the nonfaces of $\Delta \setminus \st(v_i)$.    
\[
\mbox{For }\Delta'',\mbox{ since }
 (J_\Delta+J'):x_i = I + \langle x_i \rangle + \langle  x_k \mid 
x_ix_k \in I \rangle 
\]
it follows that $\Delta''$ is obtained by deleting all vertices not 
connected to $v_i$, as well as $v_i$ itself, so what remains is $\lk(v_i)$. 
\end{proof}
\begin{lem}\label{sesEdge}For the short exact sequence
\[
0 \longrightarrow S(-2)/(I:x_ix_j) \stackrel{\cdot x_ix_j}{\longrightarrow} 
S/I \longrightarrow S/(I+x_ix_j) \longrightarrow 0,
\]

we have
\[
\begin{array}{ccc}
S/(I+x_ix_j)  & \simeq & S'/(J'+J_{\Delta'}) \\
S/(I:x_ix_j)  & \simeq & S'/(J'+J_{\Delta''})
\end{array}
\]
where $\Delta' = \Delta \setminus \st(e_{ij})$, and $\Delta'' = \lk(e_{ij})$. 
\end{lem}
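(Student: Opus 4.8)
The proof will parallel that of Lemma~\ref{sesVertex}, treating the edge $e_{ij}$ as the pair of vertices $\{v_i,v_j\}$ in place of the single vertex $v_i$. The short exact sequence itself is the standard one attached to multiplication by the monomial $x_ix_j$, so all the content is in identifying the two outer terms as Stanley--Reisner-type quotients.

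For the quotient term $S/(I+x_ix_j)$, write $I=J'+J_\Delta$. Adjoining the quadric $x_ix_j$ to $I$ makes non-minimal every generator of $J_\Delta$ divisible by $x_ix_j$, so $J_\Delta+\langle x_ix_j\rangle$ is again a squarefree (quadratic) monomial ideal: it is the Stanley--Reisner ideal of the complex obtained from $\Delta$ by declaring $\{v_i,v_j\}$ a non-face, whose faces are precisely the faces of $\Delta$ not containing both $v_i$ and $v_j$. That complex is exactly $\Delta\setminus\st(e_{ij})$. Since $J'$ is unchanged and no variable is killed, here $S'=S$ and $S/(I+x_ix_j)\simeq S'/(J'+J_{\Delta'})$ with $\Delta'=\Delta\setminus\st(e_{ij})$.

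For the sub term $S/(I:x_ix_j)$, I would compute the colon ideal monomial by monomial, as in Lemma~\ref{sesVertex}. Because $x_i^2,x_j^2\in J'$, both $x_i$ and $x_j$ lie in $(I:x_ix_j)$; and whenever $x_ix_k\in I$ or $x_jx_k\in I$ — equivalently, $v_k$ fails to be adjacent to one of $v_i,v_j$ — multiplying $x_k$ by $x_ix_j$ lands in $I$, so $x_k\in(I:x_ix_j)$. Verifying that nothing further is forced (this uses that $J_\Delta$ is quadratic, since $\Delta$ is flag) gives
\[
(I:x_ix_j)=I+\langle x_i,x_j\rangle+\langle x_k \mid x_ix_k\in I \text{ or } x_jx_k\in I\rangle .
\]
Hence the surviving variables are exactly those $x_k$ with $v_k$ adjacent to both $v_i$ and $v_j$, i.e.\ the vertices of $\lk(e_{ij})$; taking $S'$ to be the polynomial ring on those variables and $\Delta''=\lk(e_{ij})$, the residual squarefree relations are precisely $J_{\Delta''}$, so $S/(I:x_ix_j)\simeq S'/(J'+J_{\Delta''})$.

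The one mildly delicate point is the colon computation underlying the second isomorphism: one must check that the listed generators exhaust $(I:x_ix_j)$ and that the leftover relations are exactly the minimal non-faces of $\lk(e_{ij})$ rather than something larger. This is where flagness of $\Delta$ (equivalently, that $J_\Delta$ is generated by quadrics) enters, in the same way as in the vertex case.
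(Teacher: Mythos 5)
Your proof is correct and follows essentially the same route as the paper: you read off $\Delta'=\Delta\setminus\st(e_{ij})$ directly from adjoining $x_ix_j$, and you compute the colon ideal $(I:x_ix_j)=I+\langle x_i,x_j\rangle+\langle x_k\mid x_ix_k\in I\text{ or }x_jx_k\in I\rangle$, identifying the surviving complex with $\lk(e_{ij})$, exactly as in the paper's argument. Your extra remark that flagness of $\Delta$ (i.e.\ $J_\Delta$ quadratic) is what guarantees the colon contributes no generators beyond the listed variables is a fair point that the paper leaves implicit.
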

\begin{proof}
Since adding $x_ix_j$ to $I$ corresponds to deleting $e_{ij}$ from
$\Delta$, the description of $\Delta'$ is automatic. For $\Delta''$,
since 
\[
(J_\Delta+J'):x_ix_j = I + \langle x_i, x_j \rangle + \langle  x_k \mid 
x_ix_k \mbox{ or } x_jx_k \in I \rangle 
\]
it follows that $\Delta''$ is obtained by deleting all vertices not 
connected to $e_{ij}$, as well as $e_{ij}$ itself, so what remains is
$\lk(e_{ij})$. 
\end{proof}

\begin{prop}\label{WLPses}
Let 
\begin{itemize}
\item $\Delta'$ denote $\Delta \setminus \st(e_{ij})$ or $\Delta \setminus \st(v_{i})$
\item $\Delta''$ denote $\lk(e_{ij})$ or $\lk(v_{i})$,
\end{itemize}
and write $A'',A,A'$ for the respective Artinian algebras, and $\delta$
for the connecting homomorphism in Equation~\ref{CONN} below. Then 
\begin{itemize}
\item $\mu_i$ is injective if both $\{\mu''_{i-2}$ $($edge$)$  or $\mu''_{i-1}$ $($vertex$)$ $\}$ and $\delta$ are injective. 
\item $\mu_i$ is surjective iff both $\mu'_i$ and $\delta$ are surjective.
\end{itemize}
\end{prop}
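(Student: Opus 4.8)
The plan is to apply multiplication by a single, carefully chosen generic linear form to the short exact sequences of Lemmas~\ref{sesVertex} and~\ref{sesEdge} and then to extract both statements from the snake lemma. The one point that needs care at the outset is that the same linear form $\ell$ should serve as a Lefschetz element for all three algebras $A$, $A'$, $A''$ at once, since a priori the dense open subsets of $S_1$ governing maximal rank might differ for the three algebras. I would handle this using the monomial hypothesis: since $I$ is a monomial ideal, so are $J'+J_{\Delta'}$ and $J'+J_{\Delta''}$, and Proposition~\ref{sum} shows that $\ell = \sum_j x_j$ is generic for $S/I$; its image in $S'$ (a quotient of $S$ by a subset of the variables) is again a sum of variables, hence generic for $A'$ and $A''$ by the same proposition. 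Fix this $\ell$.

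Next, because $S$ is commutative, multiplication by $\ell$ commutes with multiplication by $x_i$ (resp.\ $x_ix_j$) and with the natural surjection, so each short exact sequence gives, in every degree, a morphism of short exact sequences whose three vertical maps are multiplication by $\ell$. Writing $\epsilon = 1$ in the vertex case and $\epsilon = 2$ in the edge case, the left-hand module is the shift $A''(-\epsilon)$, so the vertical map there in degree $i$ is $\mu''_{i-\epsilon}$; the snake lemma then produces the six-term exact sequence
\begin{equation}\label{CONN}
0 \to \ker \mu''_{i-\epsilon} \to \ker \mu_i \to \ker \mu'_i \xrightarrow{\delta} \coker \mu''_{i-\epsilon} \to \coker \mu_i \to \coker \mu'_i \to 0 ,
\end{equation}
with $\delta$ the connecting homomorphism.

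Both assertions then fall out of~\eqref{CONN}. For surjectivity, exactness at the last two terms yields a short exact sequence $0 \to \coker\delta \to \coker\mu_i \to \coker\mu'_i \to 0$, since the kernel of $\coker\mu_i \to \coker\mu'_i$ is the image of $\coker\mu''_{i-\epsilon}$, which is $\coker\mu''_{i-\epsilon}/\im\delta = \coker\delta$; hence $\coker\mu_i = 0$ exactly when both $\delta$ and $\mu'_i$ are surjective. For injectivity, exactness at the first three terms gives $\ker\mu''_{i-\epsilon} \subseteq \ker\mu_i$ with quotient isomorphic to $\ker\delta \subseteq \ker\mu'_i$, so $\mu''_{i-\epsilon}$ and $\delta$ both injective forces $\ker\mu_i = 0$ (the converse is in fact also true, but is not needed here).

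I expect the only real obstacle to be the opening step, namely establishing that a common Lefschetz element exists for $A$, $A'$, and $A''$, rather than the snake-lemma bookkeeping, which is routine once the degree shift $\epsilon$ is tracked correctly. This is precisely the point at which the hypothesis that $I$ is generated by monomials enters in an essential way; without it one would have to restate the proposition allowing the three multiplication maps to be taken with respect to possibly different general forms.
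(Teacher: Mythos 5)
Your proof is correct and follows exactly the route the paper takes: apply the snake lemma to the graded pieces of the short exact sequences from Lemmas~\ref{sesVertex} and~\ref{sesEdge}, using multiplication by a common linear form as the vertical map, and read off both claims from the resulting six-term exact sequence. Your extra care in justifying that $\ell=\sum_j x_j$ serves simultaneously as a Lefschetz element for $A$, $A'$, and $A''$ (via Proposition~\ref{sum} and the observation that $A'$, $A''$ are again monomial quotients of a polynomial ring in fewer variables) is a point the paper leaves implicit, and your bookkeeping of the shift $\epsilon$ and the extraction of the two statements from the exact sequence is all correct.
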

\begin{proof}
Apply the snake lemma to the graded pieces
of the short exact sequences of Lemma~\ref{sesVertex} or
Lemma~\ref{sesEdge}. For example,  in the case of Lemma~\ref{sesEdge},
this yields
\[
\xymatrix{ 
0  \ar[r]  & A_{i-2}''\ar[r]^{\cdot x_ix_j}\ar[d]^{\mu''_{i-2}} &A_{i} \ar[d]^{\mu_i}\ar[r]& A_{i}' \ar[r] \ar[d]^{\mu'_i}&0\\ 
0  \ar[r]  & A_{i-1}''\ar[r]^{\cdot x_ix_j}                   &A_{i+1}\ar[r]                  & A_{i+1}' \ar[r]         &0}
\] 
Letting $K'', K, K'$ denote the kernels of $\mu_l$ on the algebras $A'',A,A'$, and similarly
for the cokernels yields a long exact sequence
\begin{equation}\label{CONN}
\xymatrix{ 
0  \ar[r]  & K''_{i-2} \ar[r]^{\cdot x_ix_2} & K_i \ar[r]    & K'_i\ar[dll]^{\delta} & \\
           &   C''_{i-1}  \ar[r]&  C_{i+1}\ar[r]  & C'_{i+1}\ar[r]           & 0},
\end{equation} 
and the result follows.
\end{proof}
\begin{cor} Equation~\ref{CONN} yields simple numerical reasons for failure of WLP for an edge (the vertex case is similar):
\begin{itemize}
\item If $A''$ fails injectivity in degree $i-2$, and $f_{i}(\Delta) \ge f_{i-1}(\Delta)$, then $A$ fails WLP in degree $i$ due to injectivity.
\item If $A'$ fails surjectivity in degree $i$, and $f_{i}(\Delta) \le f_{i-1}(\Delta)$, then $A$ fails WLP in degree $i$ due to surjectivity.
\end{itemize}
\end{cor}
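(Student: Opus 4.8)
The plan is to read off both statements directly from the long exact sequence in Equation~\ref{CONN}, using Lemma~\ref{lem:h is f-vector} to translate dimensions of graded pieces into values of the $f$-vector. The corollary is really just a bookkeeping consequence of Proposition~\ref{WLPses} together with a dimension count, so the proof should be short; the only ``work'' is making sure the degree shifts coming from Lemma~\ref{sesEdge} are tracked correctly.

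For the first bullet, suppose $A''$ fails injectivity in degree $i-2$, so $K''_{i-2} \neq 0$. The left portion of Equation~\ref{CONN} reads $0 \to K''_{i-2} \xrightarrow{\cdot x_ix_j} K_i \to K'_i$, so $K_i \neq 0$ and hence $\mu_i : A_i \to A_{i+1}$ is not injective. By Lemma~\ref{lem:h is f-vector}, $\dim A_i = f_{i-1}(\Delta)$ and $\dim A_{i+1} = f_i(\Delta)$, so the hypothesis $f_i(\Delta) \ge f_{i-1}(\Delta)$ says $\dim A_i \le \dim A_{i+1}$; for a map with this inequality on dimensions, maximal rank means injectivity, which we have just ruled out. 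Hence $A$ fails WLP in degree $i$, and the failure is ``due to injectivity'' in the sense that $\mu_i$ is not injective even though it could have been, given the Hilbert function. (One should note in passing that $x_ix_j \neq 0$ in $A$, i.e. $e_{ij}$ is an edge of $\Delta$, for the map $\cdot x_ix_j$ on the relevant pieces to be the genuine injection furnished by Lemma~\ref{sesEdge}; this is implicit in the setup.)

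For the second bullet, suppose $A'$ fails surjectivity in degree $i$, so $C'_i \neq 0$. The right end of Equation~\ref{CONN} is exact at $C'_{i+1}$, but what we actually need is the analogous sequence one degree lower, or equivalently the cokernel exact sequence $C''_i \to C_i \to C'_i \to 0$ obtained by applying the snake lemma to the graded pieces in degrees $i-1$ and $i$ rather than $i$ and $i+1$; its surjectivity onto $C'_i$ gives $C_i \neq 0$, so $\mu_i : A_i \to A_{i+1}$ is not surjective. Again by Lemma~\ref{lem:h is f-vector}, the hypothesis $f_i(\Delta) \le f_{i-1}(\Delta)$ reads $\dim A_{i+1} \le \dim A_i$, so maximal rank here would force surjectivity; since that fails, $A$ fails WLP in degree $i$ due to surjectivity. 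The vertex case is identical after replacing the degree shift $2$ by $1$ throughout and $\st(e_{ij}), \lk(e_{ij})$ by $\st(v_i), \lk(v_i)$.

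The only mild obstacle is purely notational: Equation~\ref{CONN} as displayed starts the cokernel row at $C''_{i-1} \to C_{i+1} \to C'_{i+1} \to 0$, coming from the degree-$i$/degree-$(i+1)$ slice of the short exact sequence, whereas the surjectivity statement in the corollary is phrased with $A'$ failing surjectivity in degree $i$ (i.e. involving $C'_i$). So before quoting it I would re-index — running the snake lemma on the degree-$(i-1)$/degree-$i$ slice instead — to get $\cdots \to C_i \to C'_i \to 0$, and then the argument goes through verbatim. No deep idea is needed beyond this alignment of indices with the $f$-vector dictionary.
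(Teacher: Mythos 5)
Your first bullet is handled correctly: $K''_{i-2}\neq 0$ injects into $K_i$ via the leftmost map of Equation~\ref{CONN}, so $\mu_i$ is not injective, and Lemma~\ref{lem:h is f-vector} translates $f_i\ge f_{i-1}$ into $\dim A_i\le\dim A_{i+1}$, making injectivity the only way to have full rank. That is exactly the intended reading.

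The second bullet, however, contains a genuine misstep stemming from a misreading of the phrase ``fails surjectivity in degree $i$.'' Throughout the paper (see the original Definition of WLP and the statement of Proposition~\ref{WLPses}, where $\mu'_i$ denotes $A'_i\to A'_{i+1}$), ``degree $i$'' always indexes the map by its \emph{source}. Thus ``$A'$ fails surjectivity in degree $i$'' means $\mu'_i\colon A'_i\to A'_{i+1}$ is not surjective, i.e.\ $C'_{i+1}\neq 0$ in the notation of Equation~\ref{CONN} --- not $C'_i\neq 0$ as you assert. Once read correctly, no re-indexing is needed at all: Equation~\ref{CONN} already ends with $C_{i+1}\to C'_{i+1}\to 0$, so $C'_{i+1}\neq 0$ forces $C_{i+1}\neq 0$, hence $\mu_i$ is not surjective, and $f_i\le f_{i-1}$ (i.e.\ $\dim A_{i+1}\le\dim A_i$) then pins the failure on surjectivity. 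Your re-indexed variant produces $C_i\neq 0$, from which you conclude ``$\mu_i\colon A_i\to A_{i+1}$ is not surjective'' --- but under the paper's convention $C_i$ is the cokernel of $\mu_{i-1}\colon A_{i-1}\to A_i$, so that inference is a non sequitur and, had your reading been followed through consistently, you would have proved a statement about degree $i-1$ rather than degree $i$. The ``mild obstacle'' you describe is illusory; the displayed Equation~\ref{CONN} is already the right slice.

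So: the strategy (read off from the connecting long exact sequence and translate Hilbert function values into $f$-vector entries) matches the intended argument, and the first bullet is correct, but the second bullet as written draws a false conclusion from $C_i\neq 0$ and should instead quote $C'_{i+1}\neq 0\Rightarrow C_{i+1}\neq 0$ directly from the displayed sequence.
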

\begin{exm}\label{failfromses}
The ideal $I= \langle x_0^2, x_1^2,x_2^2,x_3^2,x_4^2,x_5^2, x_0x_1, x_2x_3\rangle
\subseteq \KK[x_0,\ldots, x_5]$ has quotient $A=S/I$ with Hilbert series 
$(1, 6, 13, 12,4)$. Applying Proposition~\ref{WLPses} with the role of $\Delta'$ played by Example~\ref{NoWLP} shows that $I$ fails WLP in degree two. This ideal and Examples~\ref{HasWLP},~\ref{NoWLP} are the only quadratic monomial ideals in $\KK[x_0,\ldots, x_5]$ which fail WLP that are not covered by Theorem~\ref{MMROthm} or ~\ref{MMRthm}.
\end{exm}

\section{Topology and $\Z/2$ coefficients } 
\begin{lem}\label{isCC}
If char$(\KK)=2$, $(A,\mu_l)$ is a chain complex.
\end{lem}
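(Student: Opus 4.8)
The statement asserts that the multiplication maps $\mu_l \colon A_i \to A_{i+1}$ assemble into a complex $\cdots \to A_{i-1} \xrightarrow{\mu_l} A_i \xrightarrow{\mu_l} A_{i+1} \to \cdots$, i.e.\ that $\mu_l \circ \mu_l = 0$ for every $i$. Since $\mu_l \circ \mu_l = \mu_{l^2}$ is multiplication by $l^2$, the plan is simply to verify that $l^2 \in I$, so that $l^2 = 0$ in $A$ and the composite vanishes.

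First I would write $l = \sum_{i=0}^r a_i x_i$ with $a_i \in \KK$ (this includes the generic choice $l = \sum_{i=0}^r x_i$ of Proposition~\ref{sum}) and expand
\[
l^2 = \sum_{i=0}^r a_i^2 x_i^2 + 2 \sum_{0 \le i < j \le r} a_i a_j\, x_i x_j.
\]
The key point is that the cross terms disappear: since $\mathrm{char}(\KK) = 2$ we have $2 a_i a_j = 0$, hence $l^2 = \sum_{i=0}^r a_i^2 x_i^2$.

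It then remains to observe that each $x_i^2$ belongs to $I$. Because $A = S/I$ is Artinian and $I$ is generated by quadratic monomials, a power $x_i^N$ can be divisible by a quadratic monomial generator of $I$ only if that generator is $x_i^2$; thus Artinianness forces $x_i^2 \in I$ for all $i$. Equivalently, $I$ admits the decomposition $I = J' + J_\Delta$ of Section~\ref{sec:one}, er, of the previous section, with $J' = \langle x_0^2,\ldots,x_r^2\rangle \subseteq I$. Consequently $l^2 = \sum_{i=0}^r a_i^2 x_i^2 \in J' \subseteq I$, so $l^2 = 0$ in $A$ and therefore $\mu_l \circ \mu_l = \mu_{l^2} = 0$.

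There is essentially no obstacle in this argument; the only point requiring a word of care is the elementary remark that Artinianness of a quadratic monomial quotient guarantees that every square of a variable lies in $I$, which is exactly what legitimizes the decomposition $I = J' + J_\Delta$ used throughout.
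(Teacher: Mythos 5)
Your proof is correct and matches the paper's argument: in both cases one expands $l^2$, observes that the cross terms $2\sum a_i a_j x_i x_j$ vanish in characteristic $2$, and concludes $l^2 \in J' \subseteq I$ because every $x_i^2$ lies in $I$ (which the paper takes for granted from the decomposition $I = J' + J_\Delta$ set up at the start of Section~2, and which you rederive from Artinianness). The only cosmetic difference is that the paper invokes Proposition~\ref{sum} to fix $l = \sum x_i$ at the outset, while you work with an arbitrary $l = \sum a_i x_i$; the paper also adds the useful remark that the dual of $\mu_l$ is the mod-$2$ simplicial boundary operator, which you do not mention but which is not needed for the statement itself.
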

\begin{proof}
By Proposition~\ref{sum}, we may assume $l = \sum x_i$, so since
$l^2 = \sum x_i^2 + 2\sum_{i \ne j}x_ix_j$ the result follows. More
is true: the multiplication map sends a monomial $x^\alpha \mapsto \sum_i x^\alpha
x_i$. Dualizing yields the transpose of the simplicial
boundary operator with $\Z/2$ coefficients. 
\end{proof}
When we focus on the WLP in a specific degree $i$, it makes sense
to consider not the simplicial complex corresponding to $J$, but
by an associated simplicial complex:
\begin{defn}\label{trunc}
For the simplicial complex $\Delta$ associated to the squarefree
quadratic monomial ideal $J$, and choice of degree $i$, let
$\Delta(i)$ be the $i$-skeleton of $\Delta$: delete all 
faces of dimension $\ge i+1$ from $\Delta$. Algebraically, this
corresponds to replacing $I$ with $I(i):=I+\langle S_{i+2} \rangle$.
\end{defn}
The next proposition shows the reason to define $\Delta(i)$: over $\Z/2$, it gives 
a precise reason for the failure of WLP in degree $i$.
\begin{prop}\label{TopFail}
Let $A$ be an Artinian quotient by a quadratic monomial ideal
$I$. Then over $\Z/2$, $A$ fails WLP in degree $i$ iff 
\begin{itemize}
\item Surjectivity fails: $H_{i}(\Delta(i), \Z/2) \ne 0$ and $f_{i} \le f_{i-1} $ or 
\item Injectivity fails: $\coker(\partial_i) \ne 0$ and $f_{i-1} \le f_{i} $.
\end{itemize}
\end{prop}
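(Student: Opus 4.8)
The plan is to reduce the whole statement to one rank computation for a simplicial boundary operator and then read off the two cases from the comparison of consecutive entries of the $f$-vector. By Proposition~\ref{sum} we may take the specific linear form $\ell=\sum_{i=0}^r x_i$. By Lemma~\ref{lem:h is f-vector}, $\dim_\KK A_i = f_{i-1}(\Delta)$ and $\dim_\KK A_{i+1}=f_i(\Delta)$, the monomial basis of $A_i$ being identified with the $(i-1)$-dimensional faces of $\Delta$. Under this identification, Lemma~\ref{isCC} (and the incidence-matrix description in its proof) says that over $\Z/2$ the map $\mu_\ell\colon A_i\to A_{i+1}$ is the transpose of the simplicial boundary operator $\partial_i\colon C_i(\Delta;\Z/2)\to C_{i-1}(\Delta;\Z/2)$; in particular $\rk\mu_\ell=\rk\partial_i$.

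Next I would unwind what ``maximal rank'' means in degree $i$. Since $\ell$ is generic, $A$ fails WLP in degree $i$ precisely when $\mu_\ell$ does not have maximal rank there, i.e. $\rk\partial_i<\min\{f_{i-1},f_i\}$. If $f_i\le f_{i-1}$, maximal rank means $\mu_\ell$ is surjective, i.e. $\rk\partial_i=f_i=\dim_\KK C_i$, which is equivalent to $\partial_i$ being injective; so failure in this case is exactly $\ker\partial_i\ne 0$. If $f_{i-1}\le f_i$, maximal rank means $\mu_\ell$ is injective, i.e. $\rk\partial_i=f_{i-1}=\dim_\KK C_{i-1}$, which is equivalent to $\partial_i$ being surjective; so failure in this case is exactly $\coker(\partial_i)\ne 0$. (When $f_{i-1}=f_i$ both descriptions apply, and ``$\mu_\ell$ bijective'' is their conjunction, so the two clauses are consistent.)

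Finally I would bring in the truncation $\Delta(i)$ to rewrite $\ker\partial_i$ as a homology group. By Definition~\ref{trunc}, passing from $I$ to $I(i)=I+\langle S_{i+2}\rangle$ does not change the algebra in degrees $i$ and $i+1$, so $\mu_\ell$ in degree $i$ --- hence $\partial_i$ together with its kernel and cokernel --- is the same for $\Delta$ and for $\Delta(i)$. But $\Delta(i)$ has no faces of dimension $i+1$, so $\partial_{i+1}=0$ in the $\Z/2$-chain complex of $\Delta(i)$, and therefore $H_i(\Delta(i);\Z/2)=\ker\partial_i$. Substituting this into the surjectivity clause and leaving the injectivity clause as stated gives exactly the two bulleted conditions.

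I do not expect a real obstacle once Lemma~\ref{isCC} is granted: the proof is just the bookkeeping of the single identity $\rk\mu_\ell=\rk\partial_i$ against the two possible inequalities $f_i\le f_{i-1}$ and $f_{i-1}\le f_i$. The one place that needs care is keeping the transpose and the indices straight, so that surjectivity of $\mu_\ell$ is matched with injectivity of $\partial_i$ (the vanishing of the top homology of the $i$-skeleton) and injectivity of $\mu_\ell$ with surjectivity of $\partial_i$ (the vanishing of $\coker\partial_i$), and not the other way around.
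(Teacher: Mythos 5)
Your argument is correct and follows the same route as the paper: both proofs hinge on Lemma~\ref{isCC}'s identification of $\mu_\ell$ (over $\Z/2$, with $\ell=\sum x_i$) as the transpose of the simplicial boundary operator $\partial_i$, then split on the sign of $f_i - f_{i-1}$, and finally pass to the truncated complex $\Delta(i)$ so that $\ker\partial_i$ becomes $H_i(\Delta(i);\Z/2)$. Your write-up spells out the genericity step via Proposition~\ref{sum} and the index bookkeeping a bit more explicitly, but there is no substantive difference from the paper's proof.
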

\begin{proof}
Let $A'=S(-1)/(I:l)$, $k = \Z/2$,  and consider the factorization of the four term exact sequence
\[
\xymatrix{
0  \ar[r]  & (\frac{I:l}{I}(-1))_i\ar[r] &A_i \ar[rr]^{\mu_{\ell}}\ar[dr]&& A_{i+1} \ar[r] &(A/l)_{i+1}\ar[r] & 0\\
           &                                       &           &           A'_{i+1}\ar[ur]\ar[dr] &               &           & \\
         &                                       &                 0\ar[ur]        &        &   0            &                & }
\]
WLP fails in degree $i$ iff $f_{i-1} \le f_{i} $ and $\ker(\mu_{\ell}) \ne 0$ or
$f_{i} \le f_{i-1} $ and $\coker(\mu_{\ell}) \ne 0$. By Lemma~\ref{isCC}, over $\Z/2$, 
$\coker(\mu_l) \ne 0$ iff $\ker(\partial_{i}) \ne 0$. 

Because we are considering WLP in degree $i$, replacing $I$ with
$I(i)$ has no impact, and the rank of $\mu_l$ is the same as the 
rank of the dual map of $\Z/2$ vector spaces
\[
A_{i+1}^\vee \stackrel{\partial_i}{\longrightarrow} A_i^\vee.
\]
In particular, $\mu_{\ell}$ is not
surjective iff $\partial_i$ has a kernel iff $H_{i}(\Delta(i), \Z/2)
\ne 0$. Similarly, $\ker(\mu_l) \ne 0$ iff $\coker(\partial_i) \ne 0$.
\end{proof}
\subsection{Lifting to characteristic zero}
Proposition~\ref{TopFail} shows that over $\Z/2$, the failure of WLP for 
quadratic monomial ideals is connected to topology. In
Example~\ref{HasWLP}, $\Delta$ is homotopic to
$S^1$ and 2 points, and in Example~\ref{NoWLP}, $\Delta$ is homotopic to $S^2$.
The Bockstein spectral sequence shows that $\Z/2$ homology abuts to integral
homology; in certain circumstances this impacts the rank of $\mu_l$.
\begin{prop}\label{TopFailLift}
For $A$ an Artinian quotient by a quadratic monomial ideal
$I$ over a field of characteristic zero and degree $i$, consider $\Delta(i)$ of Definition~\ref{trunc}. 
\begin{itemize}
\item If the connecting homomorphism $\delta$ in $(\ref{LES})$ is not injective (and therefore $H_i(\Delta(i), \Z/2) \ne 0$), then $\mu_l$ fails surjectivity; if also $f_i \le f_{i-1}$, WLP fails.
\item If the connecting homomorphism $\delta$ in $(\ref{LES})$ is injective 
(for example if $H_i(\Delta(i), \Z/2) =0$), then $\mu_l$ is surjective and WLP holds.
\end{itemize}
\end{prop}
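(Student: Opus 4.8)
The plan is to reduce the characteristic-zero statement to the $\Z/2$ computation of Proposition~\ref{TopFail} by comparing $\mu_l$ in characteristic zero with the corresponding simplicial boundary map, using the universal coefficient / Bockstein machinery alluded to just before the statement. First I would fix $l = \sum x_i$ (legitimate by Proposition~\ref{sum}), pass to $I(i)$ (harmless for WLP in degree $i$ by Definition~\ref{trunc} and the argument in Proposition~\ref{TopFail}), and dualize, so that the surjectivity of $\mu_l\colon A_i \to A_{i+1}$ is equivalent to the injectivity of the transposed boundary map $\partial_i\colon A_{i+1}^\vee \to A_i^\vee$, which over any coefficient ring is literally the simplicial chain complex of $\Delta(i)$ in degrees $i+1 \to i$. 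So $\mu_l$ is surjective over $\KK$ (char $0$, hence effectively over $\Q$, hence over $\Z$ after clearing denominators since everything is defined over $\Z$) if and only if $H_i(\Delta(i);\Z)$ has no classes detected by the cycle condition in degree $i$ relative to the boundaries from degree $i+1$ — i.e.\ iff the degree-$i$ part of integral homology of $\Delta(i)$ (which, being a top-ish skeleton computation, is torsion-free exactly when no $\Z/2$ class lifts) vanishes.

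Next I would make precise the exact sequence labeled $(\ref{LES})$ that the statement refers to: this is the long exact Bockstein (or universal-coefficient) sequence
\[
\cdots \longrightarrow H_i(\Delta(i);\Z) \stackrel{\cdot 2}{\longrightarrow} H_i(\Delta(i);\Z) \longrightarrow H_i(\Delta(i);\Z/2) \stackrel{\delta}{\longrightarrow} H_{i-1}(\Delta(i);\Z) \longrightarrow \cdots
\]
so that the connecting map $\delta$ is injective precisely when multiplication by $2$ on $H_i(\Delta(i);\Z)$ is surjective, i.e.\ when $H_i(\Delta(i);\Z)$ is $2$-divisible; since $\Delta(i)$ is a finite complex its integral homology is finitely generated, and a finitely generated $2$-divisible abelian group is trivial. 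Hence $\delta$ injective $\iff H_i(\Delta(i);\Z) = 0$ $\iff$ $\partial_i$ is injective over $\Z$ (equivalently over $\Q$, equivalently over $\KK$) $\iff$ $\mu_l$ is surjective. Conversely, if $\delta$ is not injective then $H_i(\Delta(i);\Z)\neq 0$, and moreover the image of the reduction map $H_i(\Delta(i);\Z)\to H_i(\Delta(i);\Z/2)$ together with the nonvanishing of $\delta$'s source forces $H_i(\Delta(i);\Z/2)\neq 0$ — this is the parenthetical remark in the first bullet; I would get the nonvanishing of $H_i(\Delta(i);\Z)$ directly (its torsion-free rank or its odd torsion is what $\delta$ being noninjective detects) and then note that a nonzero finitely generated group is never $2$-divisible, so some class survives to $\Z/2$. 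Then the surjectivity failure of $\mu_l$ follows from the rank count, and when additionally $f_i \le f_{i-1}$ (equivalently $\dim A_{i+1} \le \dim A_i$), failure of surjectivity is genuinely a failure of WLP in degree $i$, by the same dichotomy used in Proposition~\ref{TopFail}.

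For the second bullet I would argue contrapositively or directly: $H_i(\Delta(i);\Z/2) = 0$ forces, via the displayed Bockstein sequence, that $\delta = 0$ on a zero group hence vacuously injective; and $\delta$ injective gives $H_i(\Delta(i);\Z) = 0$ as above, hence $\partial_i$ injective over $\KK$, hence $\mu_l$ surjective; surjectivity of $\mu_l$ in degree $i$ then propagates to all higher degrees by Proposition~\ref{injsurj}, and since WLP in degrees $\ge i$ is only a surjectivity requirement (we are in the range where injectivity is not at issue, or it is handled separately), WLP holds. The main obstacle I anticipate is being careful about the base-change and integrality bookkeeping: one must check that $\partial_i$ injective over $\Z$ is equivalent to injective over a characteristic-zero field (clear, since the cokernel's torsion-free rank is computed by a matrix over $\Z$), and that the relevant "homology of $\Delta(i)$ in degree $i$" really is controlled by the two-term piece $C_{i+1}\to C_i$ rather than by lower-degree data — this is exactly why $\Delta(i)$ rather than $\Delta$ is the right object, and it is where I would spend the most care, invoking the fact that truncating above degree $i+1$ does not change $\ker \partial_{i+1}$ or $\im \partial_{i+1}$, so the relevant rank of $\mu_l$ is unchanged.
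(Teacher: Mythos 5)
There is a genuine gap, and it sits exactly at the point where you pass from $\Z/2$ to characteristic zero. You assert that the transpose of $\mu_l$ (with $l=\sum x_i$) ``over any coefficient ring is literally the simplicial chain complex of $\Delta(i)$ in degrees $i+1\to i$,'' and you then replace the sequence (\ref{LES}) by the classical Bockstein sequence $H_i(\Delta(i);\Z)\stackrel{\cdot 2}{\to} H_i(\Delta(i);\Z)\to H_i(\Delta(i);\Z/2)\stackrel{\delta}{\to} H_{i-1}(\Delta(i);\Z)$, concluding that $\delta$ is injective iff $H_i(\Delta(i);\Z)=0$ iff $\mu_l$ is surjective. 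Both steps fail. The matrix of $\mu_l^\vee$ in characteristic zero has entries $0$ and $1$; it coincides with the signed simplicial boundary operator only after reduction mod $2$ (this is precisely Lemma~\ref{isCC}: $(A,\mu_l)$ is a chain complex only in characteristic $2$, since $\mu_l^2=\mu_{l^2}\neq 0$ otherwise). Consequently surjectivity of $\mu_l$ over a characteristic-zero field is \emph{not} governed by $H_i(\Delta(i);\Z)$, and the $\delta$ of (\ref{LES}) is not the topological Bockstein: in the paper it is the snake-lemma connecting map obtained by reducing the integral, unsigned map $\mu_l^\vee\colon B_{i+1}^\vee\to B_i^\vee$ modulo $2$, so it goes from $H_i(\Delta(i),\Z/2)=\ker(\partial_i)$ (the top homology of the truncated complex) into $\coker(\mu_l^\vee)$, and its injectivity is equivalent, via freeness of $\ker(\mu_l^\vee)$, to $\ker(\mu_l^\vee)=0$, i.e.\ to surjectivity of $\mu_l$ after localizing at $0$. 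That condition genuinely depends on the integral unsigned matrix, not on the homotopy type of $\Delta(i)$.

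The paper's own $n$-cycle example refutes your chain of equivalences. For $n$ odd, $H_1(\Delta(1);\Z)\cong\Z\neq 0$ and the classical Bockstein out of $H_1(\Delta(1);\Z/2)\cong\Z/2$ is zero (the nonzero mod-$2$ class lifts to an integral class), so your criterion predicts failure of surjectivity; but $\det\mu_l^\vee=2$, so $\mu_l$ is an isomorphism in degree $1$ and WLP holds --- consistently with the paper, where $\delta$ is the identity in that case, while for $n$ even $\delta=0$ and WLP fails, even though the topological data $H_1(\Delta(1);\Z)$ and $H_1(\Delta(1);\Z/2)$ are identical in the two cases. So the reduction to ``vanishing of integral simplicial homology'' cannot work. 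The correct route (the paper's) is: lift to the integral Stanley--Reisner ring, use that the graded pieces of $B=\Z[x_0,\ldots,x_r]/I(i)$ are free $\Z$-modules, tensor the two-term complex $\mu_l^\vee\colon B_{i+1}^\vee\to B_i^\vee$ with $0\to\Z\stackrel{\cdot 2}{\to}\Z\to\Z/2\to 0$, and read off from the resulting snake-lemma sequence (\ref{LES}) that $\delta$ fails to be injective exactly when $\ker(\mu_l^\vee)$ is a nonzero free $\Z$-module, which is exactly failure of surjectivity of $\mu_l$ in characteristic zero; the numerical condition $f_i\le f_{i-1}$ then converts this into failure (or, in the second bullet, validity) of WLP in degree $i$. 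Your framing of the setup (choice of $l$, truncation to $\Delta(i)$, dualization) is fine, but the characteristic-zero heart of the argument has to be the integral unsigned map, not integral homology.
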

\begin{proof}
We work with the integral Stanley-Reisner
ring $\Z[x_0,\ldots,x_r]$ and let $B$ be the quotient by the ideal
$I(i)$. The graded pieces of $B$ are free $\Z$-modules, and so we may
tensor the short exact sequence 
\[
0 \longrightarrow \Z \stackrel{\cdot 2}{\longrightarrow} \Z
\longrightarrow \Z/2 \longrightarrow 0
\]
to obtain a commuting diagram with exact rows of $\Z$-modules 
\[
\xymatrix{
0  \ar[r]  & B_{i+1}^\vee\ar[r]^{\cdot 2}\ar[d]^{\mu_l^\vee} &B_{i+1}^\vee \ar[d]^{\mu_l^\vee}\ar[r]& B_{i+1}^\vee \otimes \Z/2 \ar[r] \ar[d]^{\partial_i}&0\\
0  \ar[r]  & B_{i}^\vee\ar[r]^{\cdot 2}                   &B_{i}^\vee\ar[r]                  & B_{i}^\vee \otimes \Z/2 \ar[r]         &0}
\]
The snake lemma yields a long exact sequence 
\begin{equation}\label{LES}
\xymatrix{
0  \ar[r]  & \ker(\mu_l^\vee)\ar[r]^{\cdot 2} &\ker(\mu_l^\vee) \ar[r]    & H_i(\Delta(i), \Z/2)\ar[dll]^{\delta} & \\
           &  \coker(\mu_l^\vee)\ar[r]^{\cdot 2}&  \coker(\mu_l^\vee)\ar[r]  & \coker(\partial_i)\ar[r]           & 0}
\end{equation}
The connecting homomorphism $\delta$ is not injective iff it has a kernel.
Because $\ker(\mu_l^\vee)$ is the kernel of a map between free
$\Z$-modules, it is torsion free and therefore free. As the 
leftmost map in the long exact sequence is induced by $\cdot 2$, 
it follows that $\ker(\mu_l^\vee)$ is a nonzero free $\Z$ module 
iff $\delta$ is not injective. Localizing at $0$ gives an
exact sequence of $\Q$ vector spaces, and so in characteristic zero, 
$A$ fails WLP due to surjectivity in degree $i$ iff  $\delta$ is not an injection
on $H_i(\Delta(i), \Z/2)$ and $f_i \le f_{i-1}$. The proof of the 
second assertion is similar.
\end{proof}
\begin{exm}
When $\Delta$ is a n-cycle, WLP fails if $n$ is even, and holds if $n$ is odd.
To see this, consider the map $\mu_l^\vee$, and $i=1$. Then $\mu_l^\vee$ has the 
form 
\[
\left[ \!
\begin{array}{ccccccc}
1      & 1      & 0     & 0      & \cdots   & 0 & 0\\
0      & 1      & 1     & 0      & \cdots   & 0 & 0\\
0      & 0      & 1     & 1      & \cdots   & 0 & 0\\
\vdots & \ddots & \ddots& \cdots &  \cdots  &  1& 1\\
1      & 0      & \cdots & \cdots & \cdots  & 0 & 1
\end{array}\! \right]
\]
For both cases, $H_1(\Delta(1),\Z/2) \simeq \Z/2$. However, when $n$ is even $\delta$ is zero, whereas
when $n$ is odd $\delta$ is the identity. The 
determinant of $\mu_l^\vee$ is $0$ for $n$ even, and $2$ for
$n$ odd, confirming our calculation.
\end{exm}
\section{Generalizing the Micha\l{}ek Mir\'o-Roig condition} 
There is an interesting connection between Example~\ref{NoWLP}, where
WLP fails in degree two due to topology, and the criterion for failure
of injectivity in degree one appearing in Theorem~\ref{MMRthm}. The key 
observation is that letting $V_1 =\{x_0,x_1\}, V_2 =\{x_2,x_3\}, V_3 =\{x_4,x_5 \}$ be bases for the trio of vector spaces $V_i$, then the 
algebra $A$ in Example~\ref{NoWLP} can be written as
\begin{equation}\label{tensor}
\bigotimes\limits_{i=1}^3 Sym(V_i)/V_i^2 \mbox{ where }V_i \mbox{ is the irrelevant ideal of }Sym(V_i).
\end{equation}
Theorem~\ref{MMRthm} deals with quadratic algebras which have a tensor 
decomposition with only two factors of type $Sym(V_i)/V_i^2$. We show that quadratic algebras with more than two factors of type $Sym(V_i)/V_i^2$ also always fail to have WLP. In fact, we prove an even more general result on tensor product algebras; the next lemma plays a key role:
\begin{lem}{\cite[Lemma 7.8]{BMMRNZ}} \label{BMMNZ lemma}
Let $A = A' \otimes A''$ be a tensor product of two graded Artinian $k$-algebras $A'$ and $A''$. Let $L' \in A'$ and $L'' \in A''$ be linear elements, and set $L := L'+L'' = L' \otimes 1 + 1 \otimes L'' \in A$. Then:

\begin{itemize}

\item[(a)] If the multiplication maps $\times L' : A'_{i-1} \rightarrow A'_i$ and $\times L'' : A''_{j-1} \rightarrow A''_j$ are both not surjective, then the multiplication map
\[
\times L : A_{i+j-1} \rightarrow A_{i+j}
\]
is not surjective.

\item[(b)] If the multiplication maps $\times L' : A'_{i} \rightarrow A'_{i+1}$ and $\times L'' : A''_{j} \rightarrow A''_{j+1}$ are both not injective, then the multiplication map
\[
\times L : A_{i+j} \rightarrow A_{i+j+1}
\]
is not injective.
\end{itemize}
\end{lem}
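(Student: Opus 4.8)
The plan is to prove part (b) by an essentially one-line argument and then obtain part (a) as its $k$-linear dual. For (b), use the hypotheses to choose nonzero elements $a' \in A'_i$ with $L'a' = 0$ and $a'' \in A''_j$ with $L''a'' = 0$. Since the tensor product is taken over the field $k$, the element $a' \otimes a'' \in A_{i+j}$ is nonzero, and since $L = L' \otimes 1 + 1 \otimes L''$ we have
\[
(\times L)(a' \otimes a'') = (L'a') \otimes a'' + a' \otimes (L''a'') = 0 .
\]
Hence $a' \otimes a''$ is a nonzero element of $\ker(\times L : A_{i+j} \to A_{i+j+1})$, so this map is not injective. That is the complete argument for (b).

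For (a) I would work with the decomposition $A_n = \bigoplus_{a+b=n} A'_a \otimes_k A''_b$ into finite-dimensional $k$-vector spaces, so that non-surjectivity of $\times L$ in degree $i+j$ is witnessed by a nonzero linear functional on $A_{i+j}$ vanishing on the image. Using the hypotheses, pick $\phi' \in \Hom_k(A'_i, k)$ nonzero with $\phi'(L' \cdot A'_{i-1}) = 0$, and likewise $\phi'' \in \Hom_k(A''_j, k)$ nonzero with $\phi''(L'' \cdot A''_{j-1}) = 0$. Define $\Phi \in \Hom_k(A_{i+j}, k)$ to equal $\phi' \otimes \phi''$ on the summand $A'_i \otimes A''_j$ and to vanish on every other summand $A'_a \otimes A''_b$ with $a + b = i+j$; then $\Phi \neq 0$. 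The claim is that $\Phi$ annihilates $\im(\times L : A_{i+j-1} \to A_{i+j})$: it suffices to evaluate $\Phi$ on $(\times L)(a' \otimes a'')$ for $a' \in A'_a$, $a'' \in A''_b$ with $a + b = i+j-1$, and expanding $(\times L)(a' \otimes a'') = (L'a') \otimes a'' + a' \otimes (L''a'')$ one sees the first term reaches the summand $A'_i \otimes A''_j$ only when $a = i-1$ (hence $b = j$), where $L'a' \in L' \cdot A'_{i-1} \subseteq \ker \phi'$, and the second term reaches it only when $b = j-1$ (hence $a = i$), where $L''a'' \in \ker \phi''$. In either case the contribution to $\Phi$ is zero, so $\Phi$ kills the image, and $\times L$ is not surjective in degree $i+j$.

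I do not expect a genuine obstacle here: part (b) is immediate, and the only point in part (a) requiring a little care is the bidegree bookkeeping — verifying exactly which summand each of the two terms $(L'a') \otimes a''$ and $a' \otimes (L''a'')$ lands in, and noting that the two ``relevant'' cases $(a,b) = (i-1,j)$ and $(a,b) = (i,j-1)$ are mutually exclusive — which is routine. As an alternative presentation, one can phrase (a) as literally the transpose of (b): non-surjectivity of $\times L$ on $A_{i+j}$ is non-injectivity of $(\times L)^\vee$ on $(A_{i+j})^\vee \cong \bigoplus_{a+b=i+j} (A'_a)^\vee \otimes (A''_b)^\vee$, the relevant restriction of $(\times L)^\vee$ again has the shape ``$\times L^\vee$'', and $\phi' \otimes \phi''$ plays the role of $a' \otimes a''$; I would use whichever of the two write-ups is cleaner in context.
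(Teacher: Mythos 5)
Your proof is correct. Note that this paper cites the lemma verbatim from \cite[Lemma 7.8]{BMMRNZ} and does not reproduce a proof, so there is no in-paper argument to compare against. Part (b) is the expected one-line tensor computation (since the algebras are commutative, no Koszul sign enters in $(1\otimes L'')(a'\otimes a'')$, so $(L'a')\otimes a''+a'\otimes(L''a'')$ is the full expansion and both terms vanish). Part (a) via the dual functional $\phi'\otimes\phi''$ extended by zero across the remaining bigraded summands of $A_{i+j}$ is the standard route, and your bidegree bookkeeping is exactly right: a summand $A'_a\otimes A''_b$ of $A_{i+j-1}$ contributes $(L'a')\otimes a''$ to the $(i,j)$ summand only when $(a,b)=(i-1,j)$, where $\phi'$ kills it, and contributes $a'\otimes(L''a'')$ only when $(a,b)=(i,j-1)$, where $\phi''$ kills it. If you prefer to avoid functionals, (a) reduces to the linear-algebra fact that over a field $U\subsetneq V$ and $W\subsetneq Z$ imply $U\otimes Z+V\otimes W\subsetneq V\otimes Z$, because $(V/U)\otimes(Z/W)\neq 0$; apply this with $U=L'A'_{i-1}\subsetneq V=A'_i$ and $W=L''A''_{j-1}\subsetneq Z=A''_j$, noting that the projection of $\im(\times L)$ to the $(i,j)$ summand lies in $U\otimes Z+V\otimes W$.
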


\begin{thm}\label{thm:tensor}
Let 
\[
A = \bigotimes\limits_{i=1}^n Sym(V_i)/V_i^{k_i}, \mbox{ with }\dim(V_i) \ge 2 \mbox{ and }n, k_i \ge 2.
\]
 Then the algebra $A$ does not have the WLP. 
 \end{thm}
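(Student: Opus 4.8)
The plan is to reduce the statement to the two-factor case by an inductive argument on $n$, using Lemma~\ref{BMMNZ lemma} as the engine. The base case $n=2$ should follow essentially from Theorem~\ref{MMRthm} together with Proposition~\ref{injsurj}: each factor $\Sym(V_i)/V_i^{k_i}$ with $\dim V_i\ge 2$ and $k_i\ge 2$ is a graded Artinian algebra whose Hilbert function strictly increases from degree $0$ to degree $1$ (since $\dim(V_i)_1=\dim V_i\ge 2>1$), so multiplication by a general linear form $L_i$ on that factor already fails to be injective in degree $1$ — this is the content of the failure of WLP for such algebras. Hence for the tensor of two factors, part (b) of Lemma~\ref{BMMNZ lemma} applied with $i=j=1$ shows $\times L$ fails injectivity in degree $2$.

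For the inductive step, I would group $A=A'\otimes A''$ where $A'=\Sym(V_1)/V_1^{k_1}$ and $A''=\bigotimes_{i=2}^n \Sym(V_i)/V_i^{k_i}$. The key point is that \emph{both} factors independently fail to have a Lefschetz element: $A'$ fails injectivity in degree $1$ as above, and by induction $A''$ fails WLP. The subtlety is that Lemma~\ref{BMMNZ lemma}(b) requires non-injectivity of $\times L''$ in some \emph{specific} degree $j$ — failure of WLP only tells us $\times L''$ fails maximal rank in some degree, which a priori could be a surjectivity failure rather than an injectivity failure. This is the main obstacle, and I expect to handle it as follows: I claim that for every factor $\Sym(V_i)/V_i^{k_i}$ the Hilbert function is symmetric (it is a complete intersection of the powers of a basis, hence Gorenstein), but more usefully, for $A''$ the Hilbert function is the convolution of symmetric unimodal sequences each of which is strictly increasing past degree $0$, so $\dim A''_0 < \dim A''_1$ and in fact the socle degree is positive; one shows $\times L''$ must fail \emph{injectivity} in some degree $j\ge 1$. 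Concretely, since $\Sym(V_i)/V_i^{k_i}$ is Gorenstein and hence level, and since $\dim(A'')_1>\dim(A'')_0$, if $\times L''$ were injective in every degree up to the middle it would force the Hilbert function to be non-decreasing through the middle, contradicting symmetry unless $A''$ is trivial; thus injectivity fails at some degree $j_0$, and then by Proposition~\ref{injsurj} (the level case) it fails for all $j\le j_0$, in particular we may take the relevant degree as needed.

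Once the degree-bookkeeping is pinned down, the inductive step is immediate: with $A'$ failing injectivity in degree $1$ and $A''$ failing injectivity in degree $j_0\ge 1$, Lemma~\ref{BMMNZ lemma}(b) gives that $\times L$ on $A$ fails injectivity in degree $1+j_0$. Since $L=L'+L''$ is, by Proposition~\ref{sum}, a generic linear form when everything is written in the monomial basis (the tensor algebra $A$ is itself a quotient by a monomial ideal, namely $\langle x^{(i)}_{j_1}\cdots x^{(i)}_{j_{k_i}}\rangle$ over all $i$ and all multi-indices from the basis of $V_i$), the failure of maximal rank of $\mu_L$ for this particular $L$ is equivalent to the failure of WLP. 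I would close by remarking that the same argument runs with part (a) in place of part (b), giving failure of surjectivity in a complementary degree, which matches the observation that these algebras are Gorenstein and the obstruction is symmetric; but for the theorem as stated the injectivity route suffices. The one step I would flag as genuinely requiring care is the verification that $\times L''$ fails injectivity (not merely maximal rank) in a usable degree — everything else is formal manipulation of short exact sequences and the already-quoted lemmas.
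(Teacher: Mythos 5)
Your argument breaks down at the very first step, and the error is not peripheral — it undermines the whole structure. You assert that for a single factor $\Sym(V_i)/V_i^{k_i}$ with $\dim V_i\ge 2$ and $k_i\ge 2$, multiplication by a general linear form ``already fails to be injective in degree $1$.'' This is false. The algebra $\Sym(V_i)/V_i^{k_i}$ is the truncated polynomial ring $\KK[x_1,\ldots,x_a]/\mathfrak m^{k_i}$ with $a=\dim V_i$, whose Hilbert function $\bigl(1,a,\binom{a+1}{2},\ldots,\binom{a+k_i-2}{k_i-1}\bigr)$ is \emph{strictly increasing}; this algebra has the strong Lefschetz property and every multiplication map $\mu_\ell$ on it is injective. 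What does fail for a single factor is \emph{surjectivity}, in every degree before the socle. Later you compound the error by declaring each factor to be ``a complete intersection of the powers of a basis, hence Gorenstein'' with symmetric Hilbert function; you appear to be conflating $\Sym(V)/V^{k}$ (quotient by the $k$-th power of the irrelevant ideal) with the monomial complete intersection $\KK[x_1,\ldots,x_a]/(x_1^k,\ldots,x_a^k)$. These are quite different algebras, and your plan to use level-ness plus symmetry of the Hilbert function of $A''$ to force an injectivity failure in a controllable degree does not survive this correction.

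You did correctly identify the genuine difficulty: Lemma~\ref{BMMNZ lemma} transports failure of injectivity (respectively surjectivity) in \emph{specified} degrees, while ``fails WLP'' alone does not tell you which mode fails or where. But the paper's resolution is different from what you attempted, and it is worth absorbing. Set $k=\sum(k_i-1)$, the socle degree of $A$. The argument splits on the sign of $\dim A_{k-1}-\dim A_k$. If $\dim A_{k-1}\ge\dim A_k$ one needs surjectivity at the top, and since each factor's top nonzero multiplication $(\Sym(V_i)/V_i^{k_i})_{k_i-2}\to(\Sym(V_i)/V_i^{k_i})_{k_i-1}$ fails surjectivity, part (a) of Lemma~\ref{BMMNZ lemma} iterated over all factors gives failure of surjectivity of $\mu_\ell\colon A_{k-1}\to A_k$. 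If instead $\dim A_{k-1}<\dim A_k$ one needs injectivity, and here the paper does \emph{not} invoke Lemma~\ref{BMMNZ lemma}(b) at all — it instead exhibits an explicit nonzero element of $\ker\mu_\ell$ in degree $k-1$, namely $\overline{m_1-m_2}$ with $m_1=\ell_1^{k_1-2}\ell_2^{k_2-1}\cdots\ell_n^{k_n-1}$ and $m_2=\ell_1^{k_1-1}\ell_2^{k_2-2}\ell_3^{k_3-1}\cdots\ell_n^{k_n-1}$, using that $\ell_i^{k_i}=0$ in each factor to make $(m_1-m_2)\cdot\ell$ telescope to zero. That construction is the essential idea you would need, and it cannot be reached by the Gorenstein/symmetry route you sketched.
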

\begin{proof} 
Let $k = \sum_{i=1}^n(k_i-1)$. First, assume  $\dim A_{k-1} \ge \dim A_k$. 
Since $\dim V_i \ge 2$, the multiplication map 
\[
(Sym(V_i)/V_i^{k_i})_{k_i - 2} \stackrel{\cdot \ell}{\longrightarrow} (Sym(V_i)/V_i^{k_i})_{k_i - 1},
\]
is not surjective, where $\ell_i$ is the sum of the variables in $Sym (V_i)$. Hence Lemma \ref{BMMNZ lemma} gives that $\times \ell: \dim A_{k-1} \to \dim A_k$ is not surjective, where $\ell = \ell_1 + \cdots + \ell_n$. Thus,  $A$ does not have the WLP. 

Second, assume $\dim A_{k-1} < \dim A_k$. Consider the residue class $0 \neq \overline{m} \in A_{k-1}$ of $m_1 - m_2 := \ell_1^{k_1 - 2} \ell_2^{k_2 - 1} \cdots \ell_n^{k_n - 1} - \ell_1^{k_1 - 1} \ell_2^{k_2 - 2} \ell_3^{k_3 - 1} \cdots \ell_n^{k_n - 1}$. Then 
\[
\overline{m} \cdot \ell = \overline{m_1} \cdot \ell_1 - \overline{m_2} \cdot \ell_2 = 0. 
\]
Thus, $\mu_\ell: \dim A_{k-1} \to \dim A_k$ is not injective, so  $A$
does not have the WLP.  
\end{proof}
\noindent When we allow one dimensional $V_i$, interesting things can happen:
\begin{thm}
    \label{thm:tensortrivial}
For $A$ Artinian with $A_d \ne 0$ and $A_{d+1}=0$, $C=A \otimes \KK[z]/z^j$ has WLP when $j \ge d+1$.
 \end{thm}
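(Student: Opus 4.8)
The plan is to analyze the multiplication map $\mu_L$ on $C = A \otimes \KK[z]/z^j$ directly in each degree, exploiting the explicit grading $C_m = \bigoplus_{a+b=m} A_a \otimes z^b$ together with the hypothesis $j \ge d+1$, which is exactly what guarantees the ``$z$-direction'' is long enough that no obstruction from the $A$-factor can be amplified. I would take the Lefschetz element to be $L = \ell + z$, where $\ell$ is a general linear form for $A$ (if $A$ itself is not standard graded in the needed sense, one passes to a polynomial presentation, but since $A$ is Artinian with $A_{d+1}=0$ we only care about finitely many degrees). Because $\KK[z]/z^j$ has the strong Lefschetz property with Lefschetz element $z$, and $A$ is an arbitrary Artinian algebra, the strategy is to reduce to the behavior of $\mu_\ell$ on $A$ in each individual degree and then ``spread'' the available injectivity/surjectivity along the $z$-tower.

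The key steps, in order: (1) Compute $HF(C,m) = \sum_{b=0}^{\min(j-1,m)} HF(A, m-b)$ and observe that since $A_{d+1}=0$, for $m \le d$ the Hilbert function $HF(C,m)$ is strictly increasing (each new summand $A_m \otimes z^0$ is nonzero), while for $m \ge j - 1 + d$ it is (weakly) decreasing, and there is a ``plateau'' region in between. So WLP requires injectivity of $\mu_L$ for $m$ up to roughly $d$-ish and surjectivity thereafter. (2) For the injective range, I would argue that $\mu_L$ restricted to the top $z$-degree components behaves well: given $\xi = \sum_{a+b=m} \xi_{a,b}\otimes z^b \in C_m$ with $\mu_L(\xi) = 0$, look at the component of $(\ell+z)\xi$ in $A_a \otimes z^{m+1-a}$ for the largest relevant $a$; the condition $j \ge d+1$ ensures the $z^{b}$ with $b = m+1-a$ does not vanish prematurely (i.e. $m+1-a \le j-1$), so the equations decouple into a triangular system forcing each $\xi_{a,b}=0$. (3) For the surjective range, dually: given a target in $C_{m+1}$, solve for a preimage degree-component by degree-component, again using that the $z$-tower is tall enough ($j \ge d+1$) that one never runs off the end of $\KK[z]/z^j$ before exhausting the $A$-degrees (which stop at $d$). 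One may phrase steps (2)–(3) cleanly by noting $C = A\otimes \KK[z]/z^j$ and using Lemma~\ref{BMMNZ lemma} \emph{contrapositively}: failure of injectivity (resp. surjectivity) for $\mu_L$ would, by that lemma's logic together with the fact that $\mu_z$ on $\KK[z]/z^j$ fails injectivity only in degrees $\ge j-1$ and fails surjectivity only in degrees $\ge j-1$, force a matching failure for $\mu_\ell$ on $A$ in a degree that the hypothesis $A_{d+1}=0$ and $j\ge d+1$ rule out.

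The main obstacle I expect is precisely the bookkeeping in the ``plateau'' region of the Hilbert function: in the range where $HF(C,\cdot)$ is flat, maximal rank means $\mu_L$ must be \emph{bijective}, so one needs both the injectivity argument and the surjectivity argument to apply simultaneously there, and checking that the threshold $j \ge d+1$ is exactly the right inequality for both to hold (rather than, say, $j \ge d+2$ or $j \ge d$) requires care. Concretely, I expect the delicate point to be the boundary degree $m = d$ (or $m = d+1$): there $A_{m}\ne 0$ but $A_{m+1}=0$, and one must verify that $z$ alone ``carries'' the map to full rank, which is where $j \ge d+1$ enters — if $j$ were smaller the top of the $z$-tower would be reached while $A$ still has nonzero pieces, breaking the triangular structure. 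A secondary, more routine obstacle is handling the element $\ell$ when $A$ is not itself a quotient by a monomial ideal (so Proposition~\ref{sum} does not literally apply); this is dealt with by taking $\ell$ general and noting maximal-rank is a Zariski-open condition, so it suffices to exhibit \emph{one} good $L$, and $\ell + z$ with generic $\ell$ works by the decoupling argument above.
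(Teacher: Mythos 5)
Your core argument is the same as the paper's: decompose $C_m=\bigoplus_{a+b=m}A_a\otimes z^b$, take $L=\ell+z$, and use the fact that the $z$-part of the multiplication contributes identity blocks making the map (block-)triangular; the paper packages exactly this as the explicit block matrix $\mu_i^C$ and truncates it when $i+1>d$, and the inequality $j\ge d+1$ enters just as you say -- the $z$-tower outlasts the socle degree of $A$, so no Lefschetz-type hypothesis on $A$ is ever needed.

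Two points in your write-up need repair. First, in the injectivity step the decoupling must start at the component of \emph{largest} $z$-exponent (smallest $A$-degree), not ``the largest relevant $a$'': the component of $(\ell+z)\xi$ in $A_{m+1}\otimes z^0$ is $\ell\,\xi_{m,0}$, which forces nothing unless $\mu_\ell$ is injective on $A_m$, and that is not assumed. Starting instead from $A_{a_1}\otimes z^{m+1-a_1}$ with $a_1$ minimal among the nonzero components gives $\xi_{a_1,m-a_1}\,z$, and the required nonvanishing $m+1-a_1\le j-1$ holds exactly in the range $m\le j-2$ where injectivity is what WLP demands; for $m\ge j-1$ one needs surjectivity instead, and there every summand $A_a\otimes z^{m+1-a}$ of $C_{m+1}$ has $z$-exponent at least $j-d\ge 1$, so one peels from $a=d$ downward (the spill-over term $d_d$ vanishes since $A_{d+1}=0$, and lower spill-overs land in components already covered). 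Second, the proposed ``clean'' rephrasing via Lemma~\ref{BMMNZ lemma} used contrapositively is not valid: that lemma only says that \emph{joint} failure of the two factors forces failure of the tensor product; it gives no way to convert a failure on $C$ into a failure on a factor, and its converse is genuinely false -- by Theorem~\ref{thm:tensor} (or Theorem~\ref{MMRthm}), $\Sym(V_1)/V_1^2\otimes\Sym(V_2)/V_2^2$ fails WLP even though each factor has WLP. So keep the direct triangular argument and drop the lemma-based shortcut.
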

\begin{proof} 
Choose bases $A_0z^i \oplus A_1z^{i-1} \oplus \cdots \oplus A_i$ for $C_i$ and $A_0z^{i+1} \oplus A_1z^{i} \oplus \cdots \oplus A_{i+1}$ for $C_{i+1}$. Then when $i+1 \le d$, the multiplication map is given by a block matrix 
\[
\mu_i^C=
\left[ \!
\begin{array}{ccccc}
I      & 0        & 0     & \cdots   & 0\\
d_0      & I      & 0     & \cdots   & \vdots\\
0      & d_1      & I     & \cdots   & \vdots\\
\vdots & 0        & d_2   & \cdots   & 0     \\
\vdots & \ddots   & 0     & \cdots   &  I \\
0      & 0        & \cdots & \cdots  &  d_i
\end{array}\! \right],
\]
where we write $d_i$ for the multiplication map $\mu_l$ from $A_i
\rightarrow A_{i+1}$; $\mu^C_i$ is clearly injective. When $k = i+1-d >0$, we truncate the last $k$ row blocks and rightmost $k-1$ column blocks of the matrix, and so the matrix still has full rank. 
\end{proof}
\noindent When  $j \le d$, things are more complicated: if $k' = i+1-d >0$, then we truncate
the matrix above by the top $k'$ row blocks, and leftmost $k'-1$ column blocks,
resulting in a matrix of the form
\[
\left[ \!
\begin{array}{ccccc}
d_{k'}  & I      & 0       & \cdots   & \vdots\\
0      & d_{k'+1} & \ddots  & \cdots   & \vdots\\
\vdots & 0        & \ddots & \cdots   & 0     \\
\vdots & \ddots   & 0      & \cdots   &  I \\
0      & 0        & \cdots & \cdots  &  d_i
\end{array}\! \right],
\]
and WLP depends on properties of $A$. We now return to the quadratic case.
\subsection{Tensor algebras of quadratic quotients}
\begin{cor}\label{dsquared}
For an Artinian $A$ which is a quotient by quadratic monomials with $A_{i+1} \ne 0$, $C=A \otimes \KK[z]/z^2$ has WLP in degree $i$ iff $\cdot \ell^2$ has full rank on $A_{i-1}$, where $\ell$ is the sum of the variables of $A$.
\end{cor}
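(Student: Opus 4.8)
The plan is to combine the matrix description of the multiplication map on $C = A \otimes \KK[z]/z^2$ with the chain-complex structure over $\Z/2$ (Lemma~\ref{isCC}) — though the statement is characteristic-free, so I will argue directly with the block matrix. Using the basis $A_i z \oplus A_i$ for $C_i$ and $A_{i+1}z \oplus A_{i+1}$ for $C_{i+1}$ (the special case $j=2$ of the setup in Theorem~\ref{thm:tensortrivial} with $L = \ell + z$), the multiplication map $\mu^C_i : C_i \to C_{i+1}$ is
\[
\mu^C_i = \begin{pmatrix} \mu_{i}^A & 0 \\ I & \mu_{i}^A \end{pmatrix}
\]
where $\mu_i^A : A_i \to A_{i+1}$ is multiplication by $\ell$ on $A$. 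Here the columns are indexed so that the first block of $C_i$ is $A_i z$ and the second is $A_i$; multiplying $A_iz$ by $\ell + z$ lands in $A_{i+1}z$ (the $\mu_i^A$ entry) and in $A_i z^2 = 0$, while multiplying $A_i$ by $\ell+z$ gives a contribution in $A_i z$ (the identity) and in $A_{i+1}$ (the $\mu_i^A$ entry).

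First I would analyze the rank of $\mu^C_i$. Row-reduce using the identity block in the lower-left corner: subtract $\mu_{i}^A$ times the (already-processed) first block row appropriately, or more cleanly, observe that after column operations clearing the lower-left $I$ against the upper-left $\mu_i^A$, the rank of $\mu^C_i$ equals $\dim A_i$ (from the $I$ block) plus the rank of the Schur-complement-type block, which is $-\mu_i^A \circ \mu_i^A = -(\mu_i^A)^2$ up to sign — i.e., multiplication by $\ell^2$ on $A_i \to A_{i+2}$. Concretely, $\rk \mu^C_i = \dim A_i + \rk(\cdot \ell^2 : A_{i} \to A_{i+2})$; by symmetry of the roles of $A_i$ and $A_{i+1}$ one also gets a description in terms of $\cdot\ell^2$ on $A_{i-1}$ when analyzing $\mu^C_{i-1}$, which is the relevant statement since WLP "in degree $i$" concerns $\mu^C_i : C_i \to C_{i+1}$, and the relevant $\ell^2$-map is on $A_{i-1}$ once one tracks the indices through the shift by $z$. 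I would then compare $\rk \mu^C_i$ against $\min(\dim C_i, \dim C_{i+1}) = \min(2\dim A_i, 2\dim A_{i+1})$ and check that maximality of this rank is equivalent to $\cdot\ell^2$ having full rank (i.e. rank $\min(\dim A_{i-1}, \dim A_{i+1})$, or rather the rank forced by the Hilbert function) on $A_{i-1}$. Genericity of $\ell$ as the sum of variables is guaranteed by Proposition~\ref{sum}, and that the optimal linear form on $C$ can be taken of the form $\ell + z$ follows from the same genericity argument applied to the monomial algebra $C$.

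The main obstacle I anticipate is the bookkeeping of indices and the precise meaning of "full rank" on both sides. The subtlety is that $\mu^C_i$ maximal rank means surjective or injective depending on whether $\dim C_i \gtrless \dim C_{i+1}$, and one must verify that this dichotomy matches the dichotomy for $\cdot\ell^2$ on $A_{i-1}$ — using that $\dim C_i - \dim C_{i+1} = 2(\dim A_i - \dim A_{i+1})$ has the same sign, so the two "full rank" conditions are genuinely compatible. A secondary technical point is justifying that the Schur complement computation is valid with the block $I$ as pivot regardless of characteristic; this is elementary linear algebra over any field once the block $I$ is genuinely the identity, which it is by construction of the basis. I expect the hypothesis $A_{i+1} \neq 0$ enters precisely to ensure the indexing is non-degenerate and that the claimed basis of $C_{i+1}$ has the stated form; without it the statement about "degree $i$" could be vacuous or the matrix could degenerate.
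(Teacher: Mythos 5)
Your overall strategy — write the multiplication on $C$ in block form with respect to the decomposition of the graded pieces of $A\otimes \KK[z]/z^2$, then use the identity block to reduce to $\mu_{\ell^2}$ on $A$ — is exactly the paper's approach. However, the bases you chose are degree-inconsistent and this propagates into a wrong matrix. The degree-$i$ piece of $C$ is $A_{i-1}z \oplus A_i$ (since $z$ has degree $1$), not $A_i z \oplus A_i$; in particular $\dim C_i = \dim A_{i-1} + \dim A_i$, not $2\dim A_i$, and $\dim C_{i+1} = \dim A_i + \dim A_{i+1}$. With the correct bases $A_{i-1}z\oplus A_i$ for $C_i$ and $A_i z\oplus A_{i+1}$ for $C_{i+1}$, the matrix is
\[
\mu^C_i = \begin{pmatrix} d_{i-1} & I_i \\ 0 & d_i \end{pmatrix},
\]
with $d_{i-1}\colon A_{i-1}\to A_i$ and $d_i\colon A_i\to A_{i+1}$ two \emph{different} maps, and the identity in the upper-right rather than the lower-left. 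Your matrix instead has the same block $\mu_i^A$ in both diagonal slots and the identity in the lower-left, which is inconsistent with your own verbal description of where each term lands, and in any case $\mu_i^A\circ\mu_i^A$ is not even composable.

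The practical consequence is the gap you yourself flag: your Schur complement lands on $\cdot\ell^2$ on $A_i$, and you then invoke ``symmetry'' and ``tracking the indices through the shift by $z$'' to move to $A_{i-1}$, which is precisely the step that needs to be justified. With the correct matrix the Schur complement against the upper-right identity $I_i$ is directly $d_i\circ d_{i-1} = \mu_{\ell^2}\colon A_{i-1}\to A_{i+1}$, so no index-shifting argument is needed. The rank-comparison part at the end of your proposal is fine once the dimension counts are repaired ($\dim C_i - \dim C_{i+1} = \dim A_{i-1} - \dim A_{i+1}$, not $2(\dim A_i - \dim A_{i+1})$), since this is exactly the gap over which $\mu_{\ell^2}$ acts, so the two ``full rank'' conditions do match up.
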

\begin{proof}
Choose bases $A_{i-1}z \oplus A_{i}$ for $C_i$ and $ A_iz \oplus A_{i+1}$ for $C_{i+1}$, and let $I_j$ be the identity on $A_j$. If $A_{i+1} \ne 0$, then $C_i \stackrel{\mu_{\ell}}{\rightarrow} C_{i+1}$ is given by the block matrix 
\[
\left[ \!
\begin{array}{cc}
d_{i-1}     & I_i\\
0        &d_{i}
\end{array}\! \right], 
\]
and changing basis for the row and column space reduces the matrix to 
\[
\left[ \!
\begin{array}{cc}
d_id_{i-1}&0\\
0        &I_i
\end{array}\! \right]. 
\]
As $d_i\cdot d_{i-1} = \mu_{l^2}$,
the result follows. 
\end{proof}
\begin{cor}
If $A_2 \ne 0$ and char$(\KK)$=2, then $A\otimes \KK[z]/z^2$ does not have WLP.
\end{cor}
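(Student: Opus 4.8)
The plan is to read this off Corollary~\ref{dsquared}, which already reduces the WLP of $C = A \otimes \KK[z]/z^2$ in a fixed degree $i$ (under the hypothesis $A_{i+1}\neq 0$) to whether $\cdot\,\ell^2$ has full rank on $A_{i-1}$, where $\ell$ denotes the sum of the variables of $A$. The point is that in characteristic $2$ the square $\ell^2$ vanishes identically in $A$, so this map cannot have full rank once its source and target are both nonzero.

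First I would check the setup applies. Writing $A = S/I$ with $I$ generated by quadratic monomials, $C = S[z]/(I+(z^2))$ is again an Artinian quotient by a quadratic monomial ideal, and $C_2$ contains a copy of $A_2$, which is nonzero by hypothesis. Hence Corollary~\ref{dsquared} is available for $C$ in degree $i = 1$, since there $A_{i+1} = A_2 \neq 0$.

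Next I would compute $\ell^2$ in $A$, with $\ell = x_0 + \cdots + x_r$. Because $A$ is Artinian and $I$ is generated by quadratic monomials, every $x_j^2$ lies in $I$ (otherwise $x_j^n\notin I$ for all $n$, contradicting Artinianness). Therefore
\[
\ell^2 = \sum_j x_j^2 + 2\sum_{j<k} x_j x_k \equiv 0 \pmod{I}
\]
in characteristic $2$ --- the same cancellation underlying Lemma~\ref{isCC}. Thus $\cdot\,\ell^2\colon A_0 \to A_2$ is the zero map; as $A_0 = \KK \neq 0$ and $A_2 \neq 0$, it is neither injective nor surjective, so it does not have full rank on $A_0$. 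By Corollary~\ref{dsquared}, $C$ fails the WLP in degree $1$, and hence $C$ does not have the WLP.

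There is no serious obstacle here; the only points to be careful about are that $A$ being Artinian genuinely forces all $x_j^2 \in I$ (so $\ell^2$ is exactly $0$, not merely nilpotent), and that choosing $i = 1$ keeps us inside the hypothesis $A_{i+1}\neq 0$ of Corollary~\ref{dsquared}. One can also bypass Corollary~\ref{dsquared} directly: in characteristic $2$, $C$ is itself a quadratic monomial quotient, so $(C,\mu_\ell)$ is a chain complex by Lemma~\ref{isCC}; then $\mu_\ell^2 = 0$ forces $0 \neq \ell \in \ker(\mu_\ell\colon C_1 \to C_2)$, so injectivity fails in degree $1$, while $\dim C_2 - \dim C_1 = \dim A_2 - 1 \geq 0$ shows that having full rank in degree $1$ would require injectivity --- so WLP fails in degree $1$ in any case.
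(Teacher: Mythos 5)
Your proof is correct and follows essentially the same route as the paper's: the paper's proof also invokes Corollary~\ref{dsquared} together with the observation (Lemma~\ref{isCC}) that over $\Z/2$ one has $\ell^2=0$ in $A$, hence $d_id_{i-1}=0$. You simply make explicit the choice of degree $i=1$ (where $A_{i+1}=A_2\neq 0$ puts you inside the hypothesis of Corollary~\ref{dsquared}) and the Artinianness argument guaranteeing every $x_j^2\in I$, both of which the paper leaves implicit.
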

\begin{proof} By Lemma~\ref{isCC} $\mu_l$ is a differential so
  $d_id_{i-1}=0$; apply Corollary~\ref{dsquared}.
\end{proof}
\begin{prop}\label{onetrivialfactor}
$(\Sym(V)/V^2) \bigotimes\limits_{i=1}^n \KK[z_i]/z_i^2$ has WLP if char$(\KK) \ne 2$. 
\end{prop}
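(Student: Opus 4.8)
The plan is to exploit the tensor decomposition $C = W \otimes E$, where $W = \Sym(V)/V^2$ has Hilbert function $(1,\dim V)$ and $E = \KK[z_1,\dots,z_n]/(z_1^2,\dots,z_n^2)$ is a complete intersection of $n$ quadrics. Two structural reductions come first. By Corollary~\ref{levelCor} the flag complex attached to $C$ is the join of $\dim V$ isolated points with the full $(n-1)$-simplex on $z_1,\dots,z_n$; this complex is pure (each facet has $n+1$ vertices) and contractible, so $C$ is level and, by Lemma~\ref{lem:h is f-vector}, $\dim C_i = \binom{n}{i} + (\dim V)\binom{n}{i-1}$. Since $C$ is level, Proposition~\ref{injsurj} reduces the verification of maximal rank of $\mu_\ell$ to the \`\`critical'' degrees: it suffices that $\mu_\ell\colon C_i\to C_{i+1}$ be injective in the largest degree $i$ with $\dim C_i\le\dim C_{i+1}$ and surjective in the smallest degree with $\dim C_i\ge\dim C_{i+1}$.

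The heart of the argument is a block-matrix computation. By Proposition~\ref{sum} I may take $\ell = \ell_V + \ell_z$, the sum of all the variables, with $\ell_V\in V = W_1$ and $\ell_z = z_1+\dots+z_n$. Since $W_{\ge 2} = 0$ we have $C_i = (1\otimes E_i)\oplus(V\otimes E_{i-1})$, and because $\ell_V^2 = 0$ in $W$ the map $\mu_\ell$ sends $1\otimes e + v\otimes e'$ to $1\otimes\ell_z e + \bigl(\ell_V\otimes e + v\otimes\ell_z e'\bigr)$. Splitting $V = \langle\ell_V\rangle\oplus V'$ makes $\mu_\ell$ block lower triangular, the diagonal blocks being $\mu_{\ell_z}\colon E_i\to E_{i+1}$, one copy of $\mu_{\ell_z}\colon E_{i-1}\to E_i$ coming from the $\ell_V$-line, and $(\dim V-1)$ copies of $\mu_{\ell_z}\colon E_{i-1}\to E_i$ coming from $V'$; the key point is that the coupling of the first two blocks is the isomorphism $e\mapsto\ell_V\otimes e$. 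Chasing the triangular shape yields
\[
\ker\bigl(\mu_\ell\colon C_i\to C_{i+1}\bigr)\;\cong\;\ker\bigl(\mu_{\ell_z^2}\colon E_{i-1}\to E_{i+1}\bigr)\ \oplus\ \bigl(V'\otimes\ker(\mu_{\ell_z}\colon E_{i-1}\to E_i)\bigr),
\]
together with a dual description of $\coker(\mu_\ell\colon C_i\to C_{i+1})$ built from $\coker(\mu_{\ell_z}\colon E_{i-1}\to E_i)$ and $\coker(\mu_{\ell_z}\colon E_i\to E_{i+1})$. In particular $\mu_\ell$ is injective in degree $i$ exactly when $\mu_{\ell_z^2}\colon E_{i-1}\to E_{i+1}$ is injective, and --- using that $E$ is level, so surjectivity propagates upward by Proposition~\ref{injsurj} --- surjective in degree $i$ exactly when $\mu_{\ell_z}\colon E_{i-1}\to E_i$ is surjective; the degenerate case $\dim V = 1$, a complete intersection of $n+1$ quadrics, is handled in the same way.

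It then remains to feed in the Lefschetz behaviour of the monomial complete intersection $E = \KK[z_1,\dots,z_n]/(z_i^2)$: for $\mathrm{char}(\KK)\ne 2$ the maps $\mu_{\ell_z}$ and $\mu_{\ell_z^2}$ have maximal rank in the critical degrees pinned down in the first paragraph, which forces both required conditions and hence WLP for $C$. One can also repackage the last two paragraphs as an induction on $n$, using Lemma~\ref{BMMNZ lemma} and Corollary~\ref{dsquared} for the inductive step and Theorem~\ref{thm:tensortrivial} for the base case $n = 1$. The main obstacle is precisely this last input: controlling the ranks of the incidence-type matrices underlying $\mu_{\ell_z}^{\,k}\colon E_j\to E_{j+k}$ at the relevant degrees. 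This is exactly where $\mathrm{char}(\KK)\ne 2$ enters --- in characteristic $2$, Lemma~\ref{isCC} forces $\mu_{\ell_z^2} = 0$, so injectivity collapses and WLP fails --- and it is the step demanding the most care, since in positive characteristic the rank of $\mu_{\ell_z}$ between consecutive degrees of $E$ is a delicate arithmetic matter.
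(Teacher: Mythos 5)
Your structural reduction is correct, and it is in fact a sharpened form of what the paper does: the paper also works with the splitting $B_i \simeq (C_1\otimes A_{i-1})\oplus A_i$ and the block matrix of Equation~\ref{TensorMatrix}, and your further splitting $V=\langle \ell_V\rangle\oplus V'$, giving $\ker(\mu_\ell\colon C_i\to C_{i+1})\cong \ker(\mu_{\ell_z^2}\colon E_{i-1}\to E_{i+1})\oplus\bigl(V'\otimes\ker(\mu_{\ell_z}\colon E_{i-1}\to E_i)\bigr)$ and the dual cokernel statement, is the same mechanism as Corollary~\ref{dsquared}, extended to $\dim V\ge 2$; the levelness reduction via Corollary~\ref{levelCor} and Proposition~\ref{injsurj} is also fine. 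The genuine gap is the step you flag and then do not carry out: that $\mu_{\ell_z}$ and, crucially, $\mu_{\ell_z^2}$ have maximal rank on $E=\KK[z_1,\dots,z_n]/(z_1^2,\dots,z_n^2)$ in the critical degrees whenever $\mathrm{char}(\KK)\ne 2$. Nothing you quote supplies this: Proposition~\ref{injsurj} only propagates ranks, Lemma~\ref{BMMNZ lemma} only transfers \emph{failure} of maximal rank, Theorem~\ref{thm:tensortrivial} requires $j\ge d+1$, and \cite{Stan} is a characteristic-zero (hard Lefschetz) result. Note that the paper's own proof is organized precisely to avoid an $\ell^2$-statement: it uses only the WLP of $E$ away from the peak, and at the single problematic middle degree (for $n$ even) it invokes the Gorenstein symmetry $d^A_{n/2}=(d^A_{n/2-1})^T$ to argue that $\left(\begin{smallmatrix} d & I\\ 0 & d^T\end{smallmatrix}\right)$ has full rank, rather than asserting maximal rank of $\ell_z^2$.

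Moreover, the missing input is not a routine verification: it is false in small positive characteristics, so your outline can only be completed in characteristic zero (where Stanley's SLP for the monomial complete intersection $E$ supplies exactly what you need). Concretely, take $n=4$, $\dim V=2$ and $\mathrm{char}(\KK)=3$. In $E$ one has $\ell_z^2=2\sigma_2$ with $\sigma_2=\sum_{i<j}z_iz_j$, and the matrix of $\mu_{\ell_z^2}\colon E_1\to E_3$ in the monomial bases is $2(J-I)$ with $J$ the $4\times 4$ all-ones matrix, whose determinant is divisible by $3$; indeed $\sigma_1=z_1+z_2+z_3+z_4$ lies in $\ker(\mu_{\ell_z^2}\colon E_1\to E_3)$. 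Your own kernel formula then produces the nonzero element $\kappa=\sigma_2+(x_1+x_2)\sigma_1\in C_2$, and one checks directly that $\ell\kappa=3\bigl((x_1+x_2)\sigma_2+\sigma_3\bigr)=0$ in characteristic $3$, while $\dim C_2=14<16=\dim C_3$ and $\ell$ (the sum of all variables) is generic for monomial ideals by Proposition~\ref{sum}. So maximal rank genuinely fails there: the ``delicate arithmetic matter'' you defer is the entire content of the proposition, and as stated for every characteristic $\ne 2$ it cannot be filled in. To complete your argument you must either restrict to characteristic zero and cite the strong Lefschetz property of $E$, or replace the last paragraph by a characteristic-sensitive analysis of the ranks of the inclusion matrices $E_{i-1}\to E_i$ and $E_{i-1}\to E_{i+1}$ at the critical degrees.
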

\begin{proof}
Let $A = \bigotimes\limits_{i=1}^n \KK[z_i]/z_i^2$, $C=Sym(V)/V^2$ and 
$B=C \otimes A$. By \cite{Stan}, $A$ has WLP; choose a basis for 
$B_i \simeq (C_1 \otimes A_{i-1}) \oplus A_i$ respecting the 
direct sum, and similarly for $B_{i+1}$. 
Let $d_i^B$ denote the multiplication map $B_i \rightarrow B_{i+1}$,
and let $d_i^A$ and $I^A_i$ denote the multiplication and identity maps 
on $A_i$. Then 
\begin{equation}\label{TensorMatrix}
d_i^B =
\left[ \!
\begin{array}{ccccc}
d_{i-1}^A & 0          & \hdots & 0             & I^A_i\\
0              & \ddots & \ddots & \vdots    & \vdots\\
\vdots     & \ddots & \ddots & 0            & \vdots\\
0             & \hdots & 0          &d_{i-1}^A & I^A_i\\
0             & \hdots & \hdots & 0             & d_i^A 
\end{array}\! \right],
\end{equation}
is a $\dim(B_{i+1}) \times \dim(B_i)$ matrix; note that if $i$ is $0$ or $n$ then $d_i^B$ changes shape, but has full rank at these values. When $n$ is odd, the graded components of $A$ of maximal dimension occur in degrees $\frac{n-1}{2}$ and $\frac{n+1}{2}$, while if $n$ is even there is a unique maximal graded component of $A$ in degree $\frac{n}{2}$. 

If $\dim(A_{i-1}) \le \dim(A_i) \le \dim(A_{i+1})$ then since $A$ has WLP both $d_{i-1}^A$ and $d^A_i$ are injective, hence so is $d^B_i$, and similarly for surjectivity. So it suffices to study the behavior of $B$ at the peak; a check shows that $\dim(B_i)$ is maximal at $m= \lceil \frac{n+1}{2}\rceil$, with 
\[
\dim(B_{m-1}) < \dim(B_m) > \dim(B_{m+1}),
\]
hence we need only show that $B_{m-1} \hookrightarrow B_m$ and 
$ B_m \twoheadrightarrow B_{m+1}$. 

When $n$ is odd, this is automatic: there are always two consecutive degrees 
in $A$ where $d_{i-1}^A$ and $d_i^A$ are both injective  or both 
surjective, and from the structure of $d_i^B$ above, this means $d_i^B$
is also injective or surjective. 

When $n$ is even, the result is more delicate. Let $d=d_{\frac{n}{2}-1}^A$ and $I =
I_{A_{\frac{n}{2}-1}}$. Then because $A$ is Gorenstein, the matrices for the multiplication 
maps are symmetric. The easiest way to see this is to note that these maps are the maps on the Koszul complex of 
$\Lambda(\KK^n)$, but with all signs positive: if char$(\KK) = 2$ then $A \simeq\Lambda(\KK^n)$ . So if $n$ is even,
we may write the matrix of Equation~\ref{TensorMatrix} as 
\begin{equation}\label{TensorMatrix2}
d_i^B =
\left[ \!
\begin{array}{ccccc}
d & 0          & \hdots & 0             & I\\
0              & \ddots & \ddots & \vdots    & I\\
\vdots     & \ddots & \ddots & 0            & \vdots\\
0             & \hdots & 0          &d & I\\
0             & \hdots & \hdots & 0             & d^T,
\end{array}\! \right],
\end{equation}
and it suffices to show that 
\[
\phi = \left[ \!
\begin{array}{cc}
d & I\\
0 & d^T 
\end{array}\! \right],
\]
has full rank. Since $d$ is injective, ordering bases so 
the top left $\dim(A_{m-1}) \times \dim(A_{m-1})$ block of $d$ has full rank, we also have that the bottom right $\dim(A_{m-1}) \times \dim(A_{m-1})$ block 
has full rank, so that $\phi$ has rank $\dim(A_m)+\dim(A_{m-1})$ and therefore $d_i^B$ has full rank. 
\end{proof}

\begin{prop}\label{twotrivialfactor}
If char($\KK$) $\ne 2$ and $\dim(V_1)=a \ge 2 \le b = \dim(V_2)$, then
\[
\Sym(V_1)/V_1^2 \bigotimes \Sym(V_2)/V_2^2 \bigotimes\limits_{i=1}^n \KK[z_i]/z_i^2
\]
has WLP iff $n$ is odd.
\end{prop}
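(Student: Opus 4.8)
The plan is to reduce the statement to a study of the multiplication map at the single ``peak'' degree, just as in the proof of Proposition~\ref{onetrivialfactor}, and then to separate the two parities of $n$. Write $A = \bigotimes_{i=1}^n \KK[z_i]/z_i^2$, $C_1 = \Sym(V_1)/V_1^2$, $C_2 = \Sym(V_2)/V_2^2$, and $B = C_1 \otimes C_2 \otimes A$. Since $C_1$ has Hilbert function $(1,a,1)$ (and similarly $C_2$), while $A$ has the symmetric unimodal Hilbert function of $\Lambda(\KK^n)$, the Hilbert function of $B$ is again symmetric and unimodal, and I would first identify the degree $m$ where $\dim B_m$ is maximal and confirm $\dim B_{m-1} < \dim B_m > \dim B_{m+1}$ (when $n$ is odd there is a two-step plateau in $A$, but the extra factors $(1,a,1)\otimes(1,b,1)$ shift and strictly peak things, so I expect a strict unique peak in $B$ for all $n$; this combinatorial bookkeeping is routine but must be done carefully). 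By Propositions~\ref{injsurj} and~\ref{sum}, and the level/unimodal structure, WLP then reduces to showing $\mu_\ell \colon B_{m-1} \to B_m$ is injective \emph{and} $\mu_\ell \colon B_m \to B_{m+1}$ is surjective, for $\ell$ the sum of all variables.

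Next I would set up the block-matrix description of $\mu_\ell$ on $B$ exactly as in Equation~\ref{TensorMatrix}, but now with the ``core'' factor $C_1 \otimes C_2$ in place of $\Sym(V)/V^2$. Grouping by powers of the $z_i$, a basis for $B_i$ is $\bigoplus_{j} (C_1\otimes C_2)_{i-j} \otimes A_j$, and $\mu_\ell = \mu_{\ell_{12}} \otimes 1 + 1 \otimes \mu_{\ell_A}$ becomes a bidiagonal block matrix whose diagonal blocks are the multiplication maps $d^{12}$ on $C_1 \otimes C_2$ and whose off-diagonal blocks are identity maps times the maps $d^A$ on $A$. Because $C_1 \otimes C_2$ is Gorenstein with Hilbert function the convolution of $(1,a,1)$ with $(1,b,1)$, namely $(1, a{+}b, ab{+}2, a{+}b, 1)$, its multiplication maps are again symmetric matrices (same Koszul-type reasoning: these are the maps of $\Lambda(\KK^a)\otimes\Lambda(\KK^b)$ with all signs made positive), and $C_1\otimes C_2$ itself has WLP for $\mathrm{char}\,\KK\neq 2$ by Theorem~\ref{thm:tensor}'s contrapositive failing only in the stated shape — more precisely, $C_1\otimes C_2 = \Sym(V_1)/V_1^2 \otimes \Sym(V_2)/V_2^2$ is exactly the two-factor case which is Togliatti/fails injectivity in degree one per Theorem~\ref{MMRthm}, so $d^{12}$ is injective below its peak, bijective nowhere in the middle, and I must track precisely where $d^{12}$ fails to be injective (degree one, from $(C_1\otimes C_2)_1$ of dimension $a+b$ into $(C_1\otimes C_2)_2$ of dimension $ab+2$; note $a+b \le ab+2$, so the failure is genuinely a rank deficiency, not a dimension count).

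The parity dichotomy then emerges from how the peak of $A$ interacts with the two-step structure of $d^{12}$. When $n$ is odd, $A$ has equal-dimensional top pieces in two consecutive degrees, so near the peak of $B$ there are two consecutive ``levels'' at which the relevant $d^A$ block is an isomorphism (injective going up to the peak, surjective coming down); the bidiagonal structure then lets the identity blocks carry full rank through, exactly as in the odd case of Proposition~\ref{onetrivialfactor}, and I would check that the single ``bad'' degree-one deficiency of $d^{12}$ is absorbed because it is paired, via the off-diagonal identities, with an isomorphism block of $A$. When $n$ is even, $A$ has a \emph{unique} top degree, so at the peak the matrix degenerates (as in Equation~\ref{TensorMatrix2}) to a $2\times 2$ block form $\bigl[\begin{smallmatrix} d^{12} & I \\ 0 & (d^{12})^T\end{smallmatrix}\bigr]$ acting between peak pieces of $B$; here I would show the rank is exactly $\dim + \mathrm{rank}(d^{12})$ which, because $d^{12}$ is \emph{not} of full rank at the relevant degree, is strictly less than $\dim B_m$, so injectivity (equivalently, by symmetry, surjectivity on the other side) fails. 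The main obstacle I anticipate is the even case: pinning down that the rank of $\bigl[\begin{smallmatrix} d & I \\ 0 & d^T\end{smallmatrix}\bigr]$ is governed by $\mathrm{rank}(d)$ rather than accidentally being restored to full rank by the identity block — this needs the precise observation that $d$ and $d^T$ share the same row/column space structure (symmetry of $d$, or at least compatibility of the cokernel of $d$ with the kernel of $d^T$), so the identity block cannot fill in the missing directions. One must also verify that the ``bad'' degree of $C_1\otimes C_2$ actually lands at the peak of $B$ when $n$ is even (a centering/parity check on degrees), which is where the hypothesis $\dim V_i \ge 2$ is used to guarantee $C_1\otimes C_2$ genuinely fails WLP rather than trivially having it.
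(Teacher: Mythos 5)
Your plan follows the same general philosophy as the paper's (split into the two parities, analyze the bidiagonal block structure of $\mu_\ell$, reduce to the peak), but there is a concrete error that undermines the even-case argument. You state that $\Sym(V_i)/V_i^2$ has Hilbert function $(1,a,1)$ and that $C_1\otimes C_2 = \Sym(V_1)/V_1^2 \otimes \Sym(V_2)/V_2^2$ is Gorenstein with Hilbert function $(1,a{+}b,ab{+}2,a{+}b,1)$. This is false: since $V_i$ is the irrelevant maximal ideal of $\Sym(V_i)$, the quotient $\Sym(V_i)/V_i^2$ has Hilbert function $(1,\dim V_i)$ with socle degree $1$, so $C_1\otimes C_2$ has Hilbert function $(1,a{+}b,ab)$, socle degree $2$, and is \emph{level but not Gorenstein} (its socle has dimension $ab>1$). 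The Gorenstein/symmetric-matrix argument in the paper is applied to $A=\bigotimes\KK[z_i]/z_i^2$ (a complete intersection, hence Gorenstein with palindromic Hilbert function), not to $C$. As a result, the $2\times 2$ block $\bigl[\begin{smallmatrix} d^{12} & I \\ 0 & (d^{12})^T\end{smallmatrix}\bigr]$ you propose for the even case does not even make dimensional sense: $d^{12}\colon C_1\to C_2$ is $(a{+}b)\to ab$, and there is no symmetry to transpose. Moreover, since $C$ has socle degree $2$, the relevant block matrix is genuinely $3\times 3$ (as in Equation~\ref{twotrivdiff}), not $2\times 2$, and it does not ``degenerate'' at the interior peak of $B$.

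For the even case the paper avoids these difficulties entirely by using Lemma~\ref{BMMNZ lemma}: both $C=\Sym(V_1)/V_1^2\otimes\Sym(V_2)/V_2^2$ (by Theorem~\ref{MMRthm} or Theorem~\ref{thm:tensor}) and $\bigotimes_{i=1}^{n}\KK[z_i]/z_i^2$ (a complete intersection with a unique peak at $k=n/2$) fail both injectivity and surjectivity at one step, so the tensor product fails both injectivity and surjectivity from degree $k+1$ to $k+2$ and hence fails WLP with no matrix analysis at all. You should adopt this argument rather than attempt a direct rank computation. Your odd-case sketch (``the single degree-one deficiency of $d^C_1$ is absorbed via the off-diagonal identity blocks'') captures the right intuition, but it omits the actual work: one must exploit that $\ker d^C_1$ is one-dimensional (spanned by $\sum x_i - \sum y_j$), choose bases so that the zero column of $d^C_1\otimes I$ is isolated, use the invertibility of $d^A_{i-1}$ to row-reduce, and then verify full rank of a residual $2\times 2$ block $\bigl[\begin{smallmatrix} d & -U^{-1}\\ 0 & d^T\end{smallmatrix}\bigr]$ built from the Gorenstein symmetry of $A$ (not of $C$). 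Without this reduction the claim that ``the identity blocks carry full rank through'' is not justified, since $d^C_1$ fails injectivity and the diagonal $A$-block structure does not automatically compensate.
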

\begin{proof}
We first show that if $n=2k$ is even then the algebra does not have the WLP. Theorem \ref{MMRthm} (or Theorem \ref{thm:tensor}, taking $k_1 = k_2 = 2$ and $n=2$) shows that 
\[
\Sym(V_1) /V_1^2 \bigotimes \Sym(V_2)/V_2^2
\]
fails both injectivity and surjectivity from degree 1 to degree 2. On the other hand, $\bigotimes_{i=1}^n \KK[z_i]/z_i^2$ is a complete intersection of quadrics, so it reaches its unique peak in degree $k$. Thus it fails injectivity from degree $k$ to $k+1$ and fails surjectivity from degree $k-1$ to $k$. Then Lemma \ref{BMMNZ lemma} shows that our tensor product fails both injectivity and surjectivity from degree $k+1$ to $k+2$, so WLP fails.

Now, let 
\[
A = \bigotimes\limits_{i=1}^n \KK[z_i]/z_i^2 \mbox{  with $n$ odd, } C=\Sym(V_1)/V_1^2 \otimes \Sym(V_2)/V_2^2,
\]
and write $B=C \otimes A$. 
Choose bases for $B_i \simeq (C_2 \otimes A_{i-2}) \oplus (C_1 \otimes A_{i-1}) \oplus A_i$ and for $B_{i+1} \simeq (C_2 \otimes A_{i-1}) \oplus (C_1 \otimes A_i)  \oplus A_{i+1}$ respecting the direct sums. Let $d_i^A$ and $I^A_i$ denote, respectively,
the multiplication and identity maps on $A_i$, with similar notation for $B$ and $C$. Then with respect to the bases above, 
\begin{equation}\label{twotrivdiff}
d_i^B =
\left[ \!
\begin{array}{ccc}
I^C_2 \otimes d_{i-2}^A & d^C_1 \otimes I^A_{i-1}       & 0\\
0                                  & I^C_1 \otimes d^A_{i-1}       & d^C_0 \otimes I^A_i\\
0                                   &                           0             & d_i^A 
\end{array}\! \right],
\end{equation}
As in the proof of Proposition~\ref{onetrivialfactor}, the key will be to use the fact that $A$ has WLP and is a complete intersection. From the structure of $d_i^B$, if we
choose $i$ such that 
\[
\dim(A_{i-2}) \le \dim(A_{i-1}) \le \dim(A_{i})\le \dim(A_{i+1}). 
\]
then $d^B_i$ is injective; and similarly for surjectivity. Because $n$ is 
odd, by symmetry the only problematic case is when $i=\frac{n+1}{2}$, so
that  
\begin{equation}\label{threeinarow}
\dim(A_{i-2}) < \dim(A_{i-1}) = \dim(A_{i}) > \dim(A_{i+1}). 
\end{equation}
Using that $a,b \ge 2$, a dimension count shows we need
to prove that $d_i^B$ is injective. Since $A$ has WLP, the 
map $d^A_{i-1}$ is an isomorphism, and as in 
the proof of Proposition~\ref{onetrivialfactor}, $d^A_{i}$ is the
transpose of $d^A_{i-2}$. However, in contrast to 
Proposition~\ref{onetrivialfactor}, the map $d^C_1$ plays a role.

If $\{x_1, \ldots, x_a \}$ is a basis for $V_1$ and $\{y_1, \ldots y_b\}$ a basis for $V_2$ then $d_1^C(\sum x_i - \sum y_j) = 0$, and an Euler characteristic 
computation shows that this is the only element of the kernel of
$d^C_1$. This means that with respect to the ordered bases 
$\{x_1, \ldots, x_a, y_1, \ldots, y_{b-1}, \sum x_i- \sum y_j\}$ 
for $C_1$ and $\{x_1y_1, \ldots, x_1y_b, x_2y_1, \ldots x_2y_b, \ldots x_ay_b \}$  for $C_2$, the matrix $d^C_1 \otimes I^A_{i-1}$ has a block decomposition with
rightmost $\dim(C_2)\cdot\dim(A_{i-1}) \times \dim(A_{i-1})$ submatrix zero. 
For example, when $a=2$ and $b=3$, with $I= I^A_{i-1}$ the matrix takes the form
\begin{equation}\label{Cdiff}
\left[ \!
\begin{array}{ccccc}
I & 0 & I & 0 & 0\\
I & 0 & 0 & I & 0\\
I & 0 & 0 & 0 & 0\\
0 &I & I &   0 & 0\\
0 &I & 0 &  I &  0\\
0 &I & 0 & 0 & 0
\end{array}\! \right],
\end{equation}
The block of $d_i^B$
corresponding to the submatrix $d^C_0 \otimes I^A_i$
consists of $\dim(C_1)$ stacked copies of $I^A_i = I^A_{i-1}$, 
while the block of $d_i^B$ corresponding to $I^C_1 \otimes d^A_{i-1} $
consists of   $\dim(C_1)$ diagonal copies of the invertible matrix
$U$ representing $d^A_{i-1}$; this follows because $n$ is odd so 
$\dim(A_{i-1}) = \dim(A_i)$ and $A$ has WLP.

Continuing with the $a=2$, $b=3$ example, and writing 
$d = d^A_{i-2}$, $I = I^A_i = I^A_{i-1}$ and $U$ for the
invertible matrix $d^A_{i-1}$, the matrix representing $d_i^B$ is
\begin{equation}\label{Bigdiff}
\left[ \!
\begin{array}{cccccccccccc}
d & 0 &0 & 0 & 0 &0 &I & 0 & I & 0 & 0 &0 \\
0 & d &0 & 0 & 0 &0 &I & 0 & 0 & I & 0&0\\
0 & 0 &d & 0 & 0 &0 &I & 0 & 0 & 0 & 0&0\\
0 & 0 &0 & d & 0 &0 &0 &I & I &   0 & 0&0\\
0 & 0 &0 & 0 & d &0 &0 &I & 0 &  I &  0&0\\
0 & 0 &0 & 0 & 0 &d &0 &I & 0 & 0 & 0&0\\
0 & 0 &0 & 0 & 0 &0 &U & 0 & 0 & 0 & 0 &I \\
0 & 0 &0 & 0 & 0 &0 &0 & U & 0 & 0 & 0 &I \\
0 & 0 &0 & 0 & 0 &0 &0 & 0 & U & 0 & 0 &I \\
0 & 0 &0 & 0 & 0 &0 &0 & 0 & 0 & U & 0 &I \\
0 & 0 &0 & 0 & 0 &0 &0 & 0 & 0 & 0 & U &I \\
0 & 0 &0 & 0 & 0 &0 &0 & 0 & 0 & 0 & 0 &d^T \\
\end{array}\! \right],
\end{equation}
Now, since $U$ is invertible of the same size as $I$,
we may use it to row reduce the matrix above. Because
the bottom row block of $d^C_1 \otimes I^A_{i-1}$ has 
only one nonzero entry, this allows row reduction of
the matrix for $d_i^B$ in the example to 
\begin{equation}\label{Bigdiff}
\left[ \!
\begin{array}{cccccccccccc}
d & 0 &0 & 0 & 0 &0 &0 & 0 & 0 & 0 & 0 &-2U^{-1} \\
0 & d &0 & 0 & 0 &0 &0 & 0 & 0 & 0 & 0&-2U^{-1}\\
0 & 0 &d & 0 & 0 &0 &0 & 0 & 0 & 0 & 0&-2U^{-1}\\
0 & 0 &0 & d & 0 &0 &0 &0 & 0 &  0 & 0&-2U^{-1}\\
0 & 0 &0 & 0 & d &0 &0 &0 & 0 &  0 &  0&-2U^{-1}\\
0 & 0 &0 & 0 & 0 &d &0 &0 & 0 & 0 & 0&-1U^{-1}\\
0 & 0 &0 & 0 & 0 &d &I & 0 & 0 & 0 & 0 &0 \\
0 & 0 &0 & 0 & 0 &d &0 & I & 0 & 0 & 0 &0 \\
0 & 0 &0 & 0 & 0 &d &0 & 0 & I & 0 & 0 &0 \\
0 & 0 &0 & 0 & 0 &d &0 & 0 & 0 & I & 0 &0 \\
0 & 0 &0 & 0 & 0 &d &0 & 0 & 0 & 0 & I &0 \\
0 & 0 &0 & 0 & 0 &0 &0 & 0 & 0 & 0 & 0 &d^T \\
\end{array}\! \right],
\end{equation}
So we need only show that 
\begin{equation}\label{tinydiff}
\left[ \!
\begin{array}{cc}
d &-U^{-1}\\
0       &d^T
\end{array}\! \right],
\end{equation}
has full rank, which follows as in Proposition~\ref{onetrivialfactor}.
\end{proof}

\medskip

\begin{thm}~\label{Ttensor}
If $\hbox{char} (\KK) \neq 2$, and 
\[
C = \bigotimes_{i=1}^n Sym(V_i)/V_i^2 \hbox{ with } \dim(V_i) = d_i, 
\]
then C has WLP if and only if one of the following holds:
\begin{enumerate}
\item $d_2,...,d_n$ are 1;
\item $d_3,...,d_n$ are 1, and $n$ is odd.
\end{enumerate}
\end{thm}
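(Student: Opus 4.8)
The plan is to reduce Theorem~\ref{Ttensor} to the three results already established: Theorem~\ref{thm:tensor}, Proposition~\ref{onetrivialfactor}, and Proposition~\ref{twotrivialfactor}. After reindexing so that $d_1 \ge d_2 \ge \cdots \ge d_n$, exactly one of the following three cases occurs, and I would treat them in turn. First, if at least three of the $d_i$ are $\ge 2$, then after collecting factors we may write $C = (\Sym(V_1)/V_1^2) \otimes (\Sym(V_2)/V_2^2) \otimes (\Sym(V_3)/V_3^2) \otimes (\text{rest})$; grouping the first three nontrivial factors and recalling $\dim(V_i)\ge 2$, Theorem~\ref{thm:tensor} (with every $k_i = 2$ and the number of nontrivial factors $\ge 3$) shows that the tensor product of just those nontrivial factors fails WLP, and then Lemma~\ref{BMMNZ lemma} propagates the failure of injectivity (or surjectivity) across tensoring with the remaining $\KK[z]/z^2$ factors, exactly as in the first paragraph of the proof of Proposition~\ref{twotrivialfactor}. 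So in this case $C$ fails WLP, and indeed neither (1) nor (2) holds, consistent with the claim.

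Second, if exactly two of the $d_i$ are $\ge 2$, say $d_1 = a \ge 2$ and $d_2 = b \ge 2$ while $d_3 = \cdots = d_n = 1$, then $C = \Sym(V_1)/V_1^2 \otimes \Sym(V_2)/V_2^2 \otimes \bigotimes_{i=3}^n \KK[z_i]/z_i^2$, which is precisely the algebra of Proposition~\ref{twotrivialfactor} with $n$ replaced by $n-2$. That proposition says $C$ has WLP if and only if $n-2$ is odd, i.e. if and only if $n$ is odd. This is exactly alternative (2): $d_3,\ldots,d_n$ are $1$ and $n$ is odd. Third, if exactly one $d_i$ is $\ge 2$, say $d_1 \ge 2$ and $d_2 = \cdots = d_n = 1$, then $C = \Sym(V_1)/V_1^2 \otimes \bigotimes_{i=2}^n \KK[z_i]/z_i^2$, which is the algebra of Proposition~\ref{onetrivialfactor} (with $n$ replaced by $n-1$), and that proposition says it always has WLP when $\hbox{char}(\KK)\ne 2$. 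This is alternative (1): $d_2,\ldots,d_n$ are $1$. Finally, the degenerate subcase where every $d_i = 1$ gives $C = \KK[z_1,\ldots,z_n]/(z_1^2,\ldots,z_n^2)$, a monomial complete intersection, which has WLP by \cite{Stan}; this also falls under (1) (vacuously, if $n=1$, or with all of $d_2,\ldots,d_n$ equal to $1$).

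I would then assemble these three cases into the biconditional: $C$ has WLP precisely when we are in case three (alternative (1)) or in case two with $n$ odd (alternative (2)), and fails WLP precisely when we are in case one or in case two with $n$ even. One should check that alternatives (1) and (2) as stated in the theorem, read against the reindexed $d_i$, correspond exactly to these surviving subcases; the only mild subtlety is the overlap/boundary between (1) and (2) when fewer than two $d_i$ exceed $1$, where (1) already applies and the parity condition in (2) is irrelevant — so the ``or'' in the statement is inclusive and no contradiction arises.

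The main obstacle is not in this bookkeeping but lies in the two Propositions being invoked: the genuinely delicate linear algebra is the even-$n$ surjectivity/injectivity analysis at the peak in Propositions~\ref{onetrivialfactor} and~\ref{twotrivialfactor}, where one exploits the Gorenstein symmetry $d^A_i = (d^A_{i-2})^T$ of the complete intersection $\bigotimes \KK[z_i]/z_i^2$ and the one-dimensional kernel of $d^C_1$ to row-reduce the block differential down to a $2\times 2$ block matrix $\left[\begin{smallmatrix} d & -U^{-1}\\ 0 & d^T\end{smallmatrix}\right]$ of full rank. Since those propositions are already proved in the excerpt, the present theorem's proof is essentially a case division plus an appeal to Lemma~\ref{BMMNZ lemma}, and I expect the only real care needed is making the reindexing and the two alternatives (1), (2) line up cleanly with the case analysis.
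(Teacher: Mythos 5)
Your reduction of the ``if'' direction to Propositions~\ref{onetrivialfactor} and~\ref{twotrivialfactor}, and of the no-trivial-factor case to Theorem~\ref{thm:tensor}, matches the paper. However, there is a genuine gap in the ``only if'' direction when at least three of the $d_i$ are $\ge 2$ \emph{and} some $d_i = 1$. You write that Lemma~\ref{BMMNZ lemma} ``propagates the failure \ldots\ exactly as in the first paragraph of the proof of Proposition~\ref{twotrivialfactor},'' but that paragraph handles only an \emph{even} number of trivial factors. Write $B' = \bigotimes_{i=1}^m \Sym(V_i)/V_i^2$ ($\dim V_i \ge 2$, $m \ge 3$) and $B'' = \bigotimes_{i=1}^p \KK[z_i]/z_i^2$. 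The complete intersection $B''$ peaks sharply at degree $k$ only when $p = 2k$ is even, giving surjectivity failure at degree $k-1 \to k$ and injectivity failure at degree $k \to k+1$, which are offset by exactly one step so Lemma~\ref{BMMNZ lemma} lands both failures for $B' \otimes B''$ in the \emph{same} degree $m+k-1 \to m+k$. When $p = 2k+1$ is odd, $B''$ has a two-step plateau: surjectivity fails at $k-1 \to k$ but injectivity does not fail until $k+1 \to k+2$. These are offset by two steps, so the two applications of Lemma~\ref{BMMNZ lemma} land in \emph{different} degrees, and neither failure alone contradicts WLP in a range where the Hilbert function of $B'\otimes B''$ is still monotone. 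Worse, when $p = 1$ the single factor $\KK[z]/z^2$ has WLP and offers no failure to feed into Lemma~\ref{BMMNZ lemma} at all; for instance with $m = 3$, $d_1 = d_2 = d_3 = 2$, $p = 1$, the Hilbert function of $C$ is $(1, 7, 18, 20, 8)$, which still rises past degree $m+k-1 \to m+k = 2 \to 3$, so the surjectivity failure you would get from the even-$p$ pattern does not apply and does not contradict WLP.

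The paper resolves the odd-$p$ case by a repairing of factors plus an induction on $k$. For the base case $p = 1$, rather than grouping all nontrivial factors together, one peels off a single nontrivial factor $\Sym(V_{n-1})/V_{n-1}^2$ and pairs it with the lone $\KK[z]/z^2$ to form $A'' = \Sym(V_{n-1})/V_{n-1}^2 \otimes \KK[z]/z^2$, whose Hilbert function $(1, d_{n-1}+1, d_{n-1})$ now has a sharp peak; this tilting makes the injectivity and surjectivity failures of $A''$ (and hence of $A' \otimes A''$) fall in the same degree, which is what Lemma~\ref{BMMNZ lemma} needs. For $p = 2k+1 \ge 3$, the paper peels off a single $\KK[x,y]/(x^2,y^2)$, which also peaks sharply, and inducts on $k$. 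You should incorporate this pairing-plus-induction argument; the direct Lemma~\ref{BMMNZ lemma} application you invoke is only valid for even $p$.

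One further small point: when you say you apply Theorem~\ref{thm:tensor} to ``the tensor product of just those nontrivial factors'' and then tensor with ``the remaining $\KK[z]/z^2$ factors,'' you should be explicit that $B'$ collects \emph{all} nontrivial factors, not just the first three, so that the remainder is genuinely a monomial complete intersection of quadrics; otherwise the residual tensor is a mixed product and neither Lemma~\ref{BMMNZ lemma} nor the Hilbert-function analysis applies cleanly.
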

\begin{proof}
From our work above, we have that:
\begin{enumerate}
\item WLP holds by Proposition~\ref{onetrivialfactor}.
\item WLP holds by Proposition~\ref{twotrivialfactor}.
\end{enumerate}
Proposition~\ref{twotrivialfactor} also covers failure of WLP  in case (2) when $n$ is even, so what remains is to show WLP fails when three or more $d_i \ge 2$; if no $d_i=1$, this follows from Theorem~\ref{thm:tensor}.

Assume for convenience that $d_1 \geq d_2 \geq \dots \geq d_n$. We first consider the case $d_{n-1} > d_n = 1$; of course then we have $n-1 \geq 3$ by assumption. Let
\[
A' = \bigotimes_{i=1}^{n-2} Sym(V_i)/V_i^2 \ \ \hbox{ , } \ \ A'' = Sym(V_{n-1}) \otimes \KK[z]/z^2 \ \ , \ \ C = A' \otimes A''.
\]
By the proof of Theorem \ref{thm:tensor}, $A'$ fails both injectivity and surjectivity from degree $n-3$ to degree $n-2$. The Hilbert function of $A''$ is $(1,d_{n-1}+1, d_{n-1})$ so clearly it fails surjectivity from degree 0 to degree 1, and it fails injectivity from degree 1 to degree 2. Then by Lemma \ref{BMMNZ lemma}, $C$ fails surjectivity from degree $n-2$ to degree $n-1$, and it fails injectivity in the same degree, hence it fails to have WLP.

Now let 
\[
B' = \bigotimes_{i=1}^m Sym(V_i)/V_i^2 \ \ , \ \ B'' = \bigotimes_{i=1}^p \KK [z_i]/z_i^2 \ \ , \ \ C = B' \otimes B''
\]
where $\dim V_i \geq 2$ for $1 \leq i \leq m$, $m \geq 3$, and  $ m+p = n$. We will show that $C$ fails WLP. 

\medskip

\noindent \underline{Case 1}: $p = 2k$ is even, $k \geq 1$.

\medskip

We will show that $C$ fails WLP from degree $m-1+k$ to $m+k$. By Theorem \ref{thm:tensor}, we know that the multiplication on $B'$ from degree ${m-1}$ to $m$ is neither surjective nor injective. Since $B''$ is a complete intersection of quadrics with even socle degree $2k$, it fails injectivity from degree $k$ to $k+1$, and it fails surjectivity from degree $k-1$ to $k$. Then Lemma \ref{BMMNZ lemma} applies as before to show that $C$ fails WLP as claimed.

\medskip

\noindent \underline{Case 2}: $p = 2k+1$ is odd.

\medskip

We will again show that $C$ fails WLP from degree $m+k-1$ to $m+k$.  Our proof is by induction on $k$, having just shown the case $k=0$. So assume $k \geq 1$. 
We rewrite $C$ as follows:
\[
C' = \bigotimes_{i=1}^m Sym(V_i)/V_i^2 \otimes \bigotimes_{i=1}^{2k-1} \KK[z_i]/z_i^2 \ \ , \ \ C'' = \KK[x,y]/(x^2,y^2) \ \ , \ \ C = C' \otimes C''.
\]
By induction, $C'$ fails both injectivity and surjectivity from degree $m+k-2$ to $m+k-1$. Clearly $C''$ fails injectivity from degree 1 to degree 2, and fails surjectivity from degree 0 to degree 1. Then Lemma \ref{BMMNZ lemma} again gives the result.
\end{proof}

\begin{rmk} If $K_{a,b}$ is the complete bipartite graph
and $K_{a,b,r}$ the cone over $K_{a,b,r-1}$ with
$K_{a,b,0} = K_{a,b}$, then the Stanley-Reisner ring of
$K_{a,b,r}$ plus squares of variables has WLP iff $r$ is odd.  
\end{rmk}

\begin{rmk} We close with some directions for future work. 
\newline
\begin{itemize}
\item Quadratic monomial ideals are always Koszul but may fail to have
  WLP. In \cite{MN2}, Migliore-Nagel conjectured
that all quadratic Artinian Gorenstein algebras have WLP; this was
recently disproved by Gondim-Zappala \cite{GZ}. It should be interesting
to investigate the confluence between Koszulness, quadratic Gorenstein
algebras, and WLP.
\item Use the syzygy bundle techniques of \cite{BK}, combined with
  Hochster's and Laksov's work on syzygies \cite{HL} to study WLP. This will mean
studying the WLP for products of linear forms, and we are at work \cite{MNS} on this.
\item Connect to the work of Singh-Walther \cite{SW} on the Bockstein spectral sequence and local cohomology of Stanley-Reisner rings.
\end{itemize}
\end{rmk}
\noindent{\bf Acknowledgments} Computations were performed using 
Macaulay2, by Grayson and Stillman, available at: {\tt http://www.math.uiuc.edu/Macaulay2/}.
Our collaboration began at the BIRS workshop ``Artinian algebras and
Lefschetz properties'', organized by S. Faridi, A. Iarrobino,
R. Mir\'o-Roig, L. Smith and J. Watanabe, and we thank them and BIRS
for a wonderful and stimulating environment. The first author was partially funded by the Simons Foundation under Grant \#309556, the second author was partially supported  by the Simons Foundation under grant \#317096.



\bibliographystyle{amsalpha}

\end{document}